\newtheorem{theorem}{Theorem}
\newtheorem{lemma}[theorem]{Lemma}
\newtheorem{definition}[theorem]{Definition}
\newtheorem{corollary}[theorem]{Corollary}
\newtheorem{proposition}[theorem]{Proposition}
\newtheorem{question}[theorem]{Question}
\theoremstyle{definition}
\newtheorem{example}[theorem]{Example}
\newcommand{\End}{\mathrm{End}}
\newcommand{\kernel}{\mathrm{ker}}
\newcommand{\Z}{\mathbb{Z}}
\newcommand{\N}{\mathbb{N}}
\newcommand{\M}{\mathbb{M}}
\newcommand{\B}{\mathcal{B}}
\newcommand{\T}{\mathcal{T}}
\newcommand{\U}{\mathcal{U}}
\begin{document}
 
\title{Infinite-Dimensional Triangularization}
\author{Zachary Mesyan}

\maketitle

\begin{abstract}
The goal of this paper is to generalize the theory of triangularizing matrices to linear transformations of an arbitrary vector space, without placing any restrictions on the dimension of the space or on the base field. We define a transformation $T$ of a vector space $V$ to be \emph{triangularizable} if $V$ has a well-ordered basis such that $T$ sends each vector in that basis to the subspace spanned by basis vectors no greater than it. We then show that the following conditions (among others) are equivalent: (1) $T$ is triangularizable, (2) every finite-dimensional subspace of $V$ is annihilated by $f(T)$ for some polynomial $f$ that factors into linear terms, (3) there is a maximal well-ordered set of subspaces of $V$ that are invariant under $T$, (4) $T$ can be put into a crude version of the Jordan canonical form. We also show that any finite collection of commuting triangularizable transformations is simultaneously triangularizable, we describe the closure of the set of triangularizable transformations in the standard topology on the algebra of all transformations of $V$, and we extend to transformations that satisfy a polynomial the classical fact that the double-centralizer of a matrix is the algebra generated by that matrix.

\medskip

\noindent
\emph{Keywords:} triangular matrix, linear transformation, simultaneous triangularization, canonical form, function topology, endomorphism ring, locally artinian module, double-centralizer

\noindent
\emph{2010 MSC numbers:} 15A04, 15A21 (primary), 16S50, 16W80 (secondary)
\end{abstract}


\section{Introduction}

The following summarizes much of the existing wisdom on triangularizing a linear transformation of a finite-dimensional vector space. Our main goal is to generalize this to transformations of vector spaces of arbitrary dimension over an arbitrary field.

\begin{theorem}[Classical Triangularization Theorem] \label{cl-tri-thrm}
Let $k$ be a field, $V$ a finite-dimensional $k$-vector space, and $\, T$ a linear transformation of $\, V$. Then the following are equivalent.
\begin{enumerate}
\item[$(1)$] $T$ has an upper-triangular representation as a matrix with respect to some basis for $\, V$.
\item[$(1')$] $T$ has a lower-triangular representation as a matrix with respect to some basis for $\, V$.
\item[$(2)$] There is a polynomial $p(x) \in k[x]\setminus k$ that factors into linear terms in $k[x]$, such that $p(T)=0$.
\item[$(3)$] There exists a well-ordered set of $T$-invariant subspaces of $\, V$, which is maximal as a well-ordered set of subspaces of $V$.
\item[$(3')$] There exists a totally ordered set of $T$-invariant subspaces of $\, V$, which is maximal as a totally ordered set of subspaces of $V$.
\item[$(4)$] $T$ has a representation as a matrix in Jordan canonical form with respect to some basis for $V$.
\end{enumerate}
\end{theorem}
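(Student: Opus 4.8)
The plan is to prove the six conditions equivalent through a short cycle of implications: handle the trivial pair $(1)\Leftrightarrow(1')$ first, then the core loop relating $(1)$, $(2)$, $(3)$, $(3')$, and finally attach $(4)$. The equivalence $(1)\Leftrightarrow(1')$ is immediate, since reversing the order of a basis converts an upper-triangular matrix into a lower-triangular one and conversely.

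For $(1)\Rightarrow(2)$, take a basis $e_1,\dots,e_n$ for which $T$ is upper triangular with diagonal entries $\lambda_1,\dots,\lambda_n$, and set $V_i=\langle e_1,\dots,e_i\rangle$. Then $(T-\lambda_i)V_i\subseteq V_{i-1}$ for each $i$, so the polynomial $p(x)=\prod_{i=1}^n(x-\lambda_i)$, which factors into linear terms, annihilates $V=V_n$; thus $p(T)=0$. For $(2)\Rightarrow(1)$ I would induct on $\dim V$. Writing $p(x)=\prod_{i=1}^m(x-\mu_i)$, not all of the operators $T-\mu_i$ can be invertible (else $p(T)$ would be invertible, contradicting $p(T)=0$ when $V\neq 0$), so $T$ has an eigenvalue $\mu$; pick an eigenvector $v_1$, pass to $\bar V=V/kv_1$, note that $p$ still annihilates the induced transformation $\bar T$, apply the inductive hypothesis to obtain an upper-triangular basis $\bar v_2,\dots,\bar v_n$ of $\bar V$, and lift it to $v_2,\dots,v_n$; then $v_1,\dots,v_n$ is a basis putting $T$ in upper-triangular form.

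For the equivalence of $(1)$ with $(3)$ and $(3')$, the key observation is that in a finite-dimensional space a maximal totally ordered (or well-ordered) set of subspaces is forced to be a complete flag $0=V_0\subsetneq V_1\subsetneq\cdots\subsetneq V_n=V$ with $\dim V_i=i$: maximality forces it to contain $0$ and $V$, and if some gap had dimension $>1$ one could insert an intermediate subspace without destroying total order, contradicting maximality. Since a strictly increasing chain of subspaces of an $n$-dimensional space has length at most $n$, any such set is automatically finite and hence well-ordered, which in particular gives $(3)\Leftrightarrow(3')$. Given a $T$-invariant complete flag, choosing $v_i\in V_i\setminus V_{i-1}$ yields an upper-triangular basis; conversely the flag $V_i=\langle e_1,\dots,e_i\rangle$ attached to an upper-triangular basis is $T$-invariant and maximal. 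This yields $(1)\Leftrightarrow(3)\Leftrightarrow(3')$.

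Finally, $(4)\Rightarrow(1)$ is clear, as a matrix in Jordan canonical form is upper triangular, and for $(2)\Rightarrow(4)$ I would use the standard construction of the Jordan form: form the primary decomposition $V=\bigoplus_{\mu}\ker(T-\mu)^{n}$ over the distinct roots $\mu$ of $p$ (valid precisely because $p$ splits), note that $T-\mu$ acts nilpotently on the $\mu$-summand, and invoke the classification of nilpotent transformations of a finite-dimensional space — equivalently the structure theorem for finitely generated torsion modules over the PID $k[x]$ — to produce a Jordan basis on each summand. I expect this last implication, the existence of the Jordan canonical form, to be the main obstacle; the remaining steps are routine induction and bookkeeping with flags.
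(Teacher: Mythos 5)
Your proposal is correct, and its skeleton of implications largely coincides with the paper's ($(1)\Leftrightarrow(1')$ by reversing the basis, $(1)\Rightarrow(2)$, $(2)\Rightarrow(4)\Rightarrow(1)$, and the flag conditions $(3)$, $(3')$ handled by the remark that chains of subspaces in finite dimension are finite, hence well-ordered). The real difference is that where the paper outsources the substance to citations, you supply self-contained arguments, and in two places you make a genuinely different choice. For $(1)\Rightarrow(2)$ the paper invokes the Cayley--Hamilton theorem, whereas you observe directly that $(T-\lambda_i I)V_i\subseteq V_{i-1}$ for the flag $V_i=\langle e_1,\dots,e_i\rangle$, so that $\prod_i(T-\lambda_i I)$ annihilates $V$; this is more elementary and is exactly the mechanism reused later in the paper's infinite-dimensional arguments. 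For the flag conditions, the paper cites Radjavi--Rosenthal for $(1)\Leftrightarrow(3')$, while you prove directly that a maximal chain of subspaces of an $n$-dimensional space must be a complete flag $0=V_0\subsetneq\cdots\subsetneq V_n=V$ and translate between $T$-invariant complete flags and upper-triangular bases; this is correct and makes the equivalence self-contained. You also add a direct inductive proof of $(2)\Rightarrow(1)$ (eigenvalue from a split annihilating polynomial, pass to the quotient, lift), which the paper gets only via the Jordan form; this has the merit of making the triangularization equivalences independent of the existence of the Jordan canonical form. Your $(2)\Rightarrow(4)$ is the same standard route the paper cites to Dummit--Foote (primary decomposition plus the classification of nilpotent transformations via the structure theorem over the PID $k[x]$), so at that one point you are leaning on an external structure theorem just as the paper does. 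In short: same theorem-level architecture, but your version is more elementary and self-contained at the cost of length, while the paper's version is a terse reduction to standard references.
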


\begin{proof}
$(1) \Leftrightarrow (1')$ The transformation $T$, viewed as a matrix, is upper-triangular with respect to a basis $v_1, \dots, v_n$ for $V$ if and only if it is lower-triangular with respect to $v_n, \dots, v_1$.

$(1) \Rightarrow (2)$ Suppose that $T$ can be represented as an $n\times n$ upper-triangular matrix with diagonal entries $a_{11}, \dots, a_{nn} \in k$, and let $p(x) = (x-a_{11})\cdots(x-a_{nn})$. Then $p(T)=0$, by the Cayley-Hamilton theorem. 

$(2) \Rightarrow (4)$ See, e.g.,~\cite[Section 12.3, Theorem 22]{DF}.

$(4) \Rightarrow (1)$ A matrix in Jordan canonical form is necessarily upper-triangular.

$(1) \Leftrightarrow (3')$ See, e.g.,~\cite[page 1]{RR}.

$(3) \Leftrightarrow (3')$ A finite set is totally ordered if and only if it is well-ordered.
\end{proof}

Given a field $k$ and a $k$-vector space $V$, we denote by $\End_k(V)$ the $k$-algebra of all linear transformations of $V$.  We define a transformation $T\in \End_k(V)$ to be \emph{triangularizable} if $V$ has a well-ordered basis $(\B, \leq)$ such that $T$ sends each vector $v \in \B$ to the subspace spanned by $\{u \in \B \mid u \leq v\}$. This definition clearly generalizes condition (1) in the above theorem. We then show, in Theorem~\ref{tri-characterization}, that for $T\in \End_k(V)$ (with $k$ and $V$ arbitrary) being triangularizable is equivalent to satisfying condition (3) above, as well as to satisfying each of the following (along with another condition).
\begin{enumerate}
\item[$(2')$] For every finite-dimensional subspace $W$ of $V$ there is a polynomial $p(x) \in k[x]\setminus k$ that factors into linear terms in $k[x]$, such that $p(T)$ annihilates $W$.
\item[$(4')$] $V = \bigoplus_{a \in k} \bigcup_{i=1}^\infty \ker((T-aI)^i)$, where $I \in \End_k(V)$ is the identity transformation.
\end{enumerate}
If $k$ is algebraically closed, then $T$ being triangularizable is also equivalent to the following.
\begin{enumerate}
\item[$(5)$] Every finite-dimensional subspace of $V$ is contained in a finite-dimensional $T$-invariant subspace of $V$.
\item[$(6)$] $V$ is locally artinian, when viewed as a $k[x]$-module, where $x$ acts on $\, V$ as $T$.
\end{enumerate}
Condition $(2')$, of course, is a direct generalization of $(2)$, while $(4')$ is a crude version of $(4)$.

There is extensive literature on triangularization of bounded linear operators on Banach spaces, where condition $(3')$ is taken to be the definition of ``triangularizable" (see~\cite{RR}). This leads to many beautiful results about when collections of operators can be simultaneously triangularized. However, (a statement equivalent to) the stronger condition $(3)$ was chosen as the definition of ``triangularizable" here, since much more of the intuition regarding triangularization, as summarized in Theorem~\ref{cl-tri-thrm}, can be preserved this way. As we show in Example~\ref{weak-tri-eg}, for a transformation $T$ of a vector space, satisfying $(3')$ is generally not equivalent to satisfying $(2')$, $(3)$, and $(4')$.

With the basics of infinite-dimensional triangularization established, we then show that any finite collection of commuting triangularizable elements of $\End_k(V)$ is simultaneously triangularizable (Theorem~\ref{sim-tri-thrm}), generalizing a well-known fact from finite-dimensional linear algebra. Next we show that the inverse, if it exists, of a triangularizable transformation is also triangularizable with respect to the same well-ordered basis, as one would hope (Proposition~\ref{inverse-prop}). Then, after reviewing the standard topology on $\End_k(V)$ in Section~\ref{top-sect}, we characterize, in Theorem~\ref{closure-thrm}, the closure of the set of triangularizable transformations in $\End_k(V)$. In particular, if the field $k$ is algebraically closed, then the closure of this set is $\End_k(V)$, which generalizes the fact, known as Shur's theorem, that over an algebraically closed field every matrix is triangularizable. Then, in Proposition~\ref{top-nilpt-prop}, we give a number of equivalent characterizations of topologically nilpotent transformations  in $\End_k(V)$, i.e., transformations $T$ such that the sequence $(T^i)_{i=1}^\infty$ converges to $0$ in the topology on $\End_k(V)$. These generalize the familiar fact that a matrix is nilpotent if and only if it is similar to a strictly upper-triangular matrix if and only if $0$ is its only eigenvalue (over the algebraic closure of the base field). In Section~\ref{poly-section} we discuss the transformations in $\End_k(V)$ which satisfy a single polynomial on the entire space $V$. Finally, in Theorem~\ref{double-cent-thrm} we generalize to transformations that satisfy a polynomial on the entire space the classical result that the double-centralizer of a matrix is the algebra generated by that matrix.

\section{Preliminaries}

We begin with the following standard fact from finite-dimensional linear algebra, which will be useful for our purposes. The usual proof also works for a vector space of arbitrary dimension, but we provide it here for completeness.

\begin{lemma} \label{lemma:eigenspaces}
Let $k$ be a field, $V$ a $k$-vector space, and $\, T \in \End_k(V)$. Also suppose that $f_1(x), \dots, f_n(x) \in k[x]$ are pairwise relatively prime polynomials, and set $S = f_1(T)\cdots f_n(T)$. Then $\, \kernel (S) = \bigoplus_{i=1}^n \kernel (f_i(T)).$
\end{lemma}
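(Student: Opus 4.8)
The plan is to proceed by induction on $n$, with the case $n = 1$ being trivial. The crux is the case $n = 2$: given relatively prime $f_1, f_2 \in k[x]$, show $\kernel(f_1(T)f_2(T)) = \kernel(f_1(T)) \oplus \kernel(f_2(T))$. By Bézout's identity in $k[x]$, there exist $g_1, g_2 \in k[x]$ with $g_1 f_1 + g_2 f_2 = 1$, hence $g_1(T)f_1(T) + g_2(T)f_2(T) = I$. First I would check the inclusion $\supseteq$: if $v \in \kernel(f_i(T))$ then $f_1(T)f_2(T)v = 0$ since the factors commute (all are polynomials in $T$), so each summand lies in $\kernel(f_1(T)f_2(T))$. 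Next, the sum is direct: if $v \in \kernel(f_1(T)) \cap \kernel(f_2(T))$, then $v = g_1(T)f_1(T)v + g_2(T)f_2(T)v = 0 + 0 = 0$. For the inclusion $\subseteq$, take $v$ with $f_1(T)f_2(T)v = 0$ and write $v = g_1(T)f_1(T)v + g_2(T)f_2(T)v$; I claim the first term lies in $\kernel(f_2(T))$ and the second in $\kernel(f_1(T))$. Indeed $f_2(T)\bigl(g_1(T)f_1(T)v\bigr) = g_1(T)\bigl(f_1(T)f_2(T)v\bigr) = 0$, and symmetrically for the other term; again this uses that polynomials in $T$ commute.

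For the inductive step, suppose the result holds for $n-1$ pairwise relatively prime polynomials, and let $f_1, \dots, f_n$ be pairwise relatively prime. Set $g = f_1 \cdots f_{n-1}$. Since each $f_i$ ($i < n$) is relatively prime to $f_n$, the product $g$ is relatively prime to $f_n$ (a product of elements each coprime to $f_n$ is coprime to $f_n$, as $k[x]$ is a UFD, or directly from Bézout). Applying the $n = 2$ case to the pair $g, f_n$ gives
\[
\kernel(S) = \kernel\bigl(g(T)f_n(T)\bigr) = \kernel\bigl(g(T)\bigr) \oplus \kernel\bigl(f_n(T)\bigr).
\]
By the induction hypothesis applied to $f_1, \dots, f_{n-1}$, we have $\kernel(g(T)) = \bigoplus_{i=1}^{n-1} \kernel(f_i(T))$, and substituting yields $\kernel(S) = \bigoplus_{i=1}^{n} \kernel(f_i(T))$, completing the induction.

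The argument is essentially self-contained and dimension-free: nothing uses finiteness of $\dim V$, only that $k[x]$ is a principal ideal domain (so Bézout applies) and that $k[x] \to \End_k(V)$, $x \mapsto T$, is a ring homomorphism with commutative image. I do not anticipate a serious obstacle; the one point requiring a little care is making sure each step uses only the commutativity among polynomials in $T$ (never commutativity of $T$ with arbitrary transformations), and that the Bézout coefficients are genuine polynomials so that $g_i(T)$ makes sense and commutes with $f_j(T)$. It may be worth remarking explicitly that the sum $\bigcup_{i=1}^\infty \kernel((T-aI)^i)$ appearing later in condition $(4')$ is a special instance of the kind of generalized eigenspace this lemma controls.
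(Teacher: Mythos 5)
Your proof is correct, and it takes essentially the same approach as the paper: both arguments rest on B\'ezout coefficients for coprime polynomials in $k[x]$ giving a decomposition of the identity by polynomials in $T$, so that only commutativity among polynomials in $T$ is used and no finiteness assumption on $\dim V$ is needed. The only difference is organizational: you induct on $n$ via the two-factor case, whereas the paper handles all $n$ at once using the complementary products $g_i = \prod_{j \neq i} f_j$ and a single identity $1 = \sum_i h_i g_i$, which in addition exhibits the decomposition $v = \sum_i h_i(T)g_i(T)(v)$ of any $v \in \kernel(S)$ explicitly.
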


\begin{proof}
For each $i \in \{1, \dots, n\}$ let $$g_i(x) = f_1(x) \cdots f_{i-1}(x)f_{i+1}(x) \cdots f_n(x).$$ Since the $f_i(x)$ are pairwise relatively prime, $\{g_1(x), \dots, g_n(x)\}$ is relatively prime in $k[x]$. Thus there exist $h_1(x), \dots, h_n(x) \in k[x]$ such that $1 = \sum_{i=1}^n h_i(x)g_i(x)$.

Let us now show that the sum $\sum_{i=1}^n \kernel (f_i(T))$ is direct. Thus suppose that $0=\sum_{i=1}^n v_i$ for some $v_i \in \kernel (f_i(T))$. Then for each $j\in \{1, \dots, n\}$ we have $$0 = g_j(T)\bigg(\sum_{i=1}^n v_i \bigg) = g_j(T)(v_j),$$ since $f_1(T), \dots, f_n(T)$ commute with each other. Thus $g_j(T)(v_i) = 0$ for all $i, j \in \{1, \dots, n\}$, and therefore $$v_j = 1\cdot v_j = \sum_{i=1}^n h_i(T)g_i(T)(v_j)= 0$$ for all $j\in \{1, \dots, n\}$. It follows that $$\sum_{i=1}^n \kernel (f_i(T)) = \bigoplus_{i=1}^n \kernel (f_i(T)).$$

Since the $f_i(T)$ commute with each other, clearly $\kernel (S) \supseteq \bigoplus_{i=1}^n \kernel (f_i(T))$, and hence it suffices to show that the reverse inclusion holds. Let $v \in \kernel (S)$. Then $$0 = h_i(T)S(v) = h_i(T)g_i(T)f_i(T)(v) = f_i(T)h_i(T)g_i(T)(v),$$ and hence $h_i(T)g_i(T)(v) \in \ker(f_i(T))$ for all $i \in \{1, \dots, n\}$. Thus $$v = \sum_{i=1}^n h_i(T)g_i(T)(v) \in \bigoplus_{i=1}^n \kernel (f_i(T)),$$ giving the desired conclusion.
\end{proof}

Recall that a binary relation $\leq$ on a set $X$ is a \emph{partial order} if it is reflexive, antisymmetric, and transitive. If, in addition, $x\leq y$ or $y \leq x$ for all $x,y \in X$, then $\leq$ is a \emph{total order}. If, moreover, every non-empty subset of $X$ has a least element with respect to $\leq$, then $\leq$ is a \emph{well order}.

We shall require the following standard set-theoretic fact.

\begin{lemma} \label{well-ord-lemma}
Let $\, (\Lambda, \leq_\Lambda)$ be a well-ordered set, and for each $\lambda \in \Lambda$ let $\, (\Omega_\lambda, \leq_\lambda)$ be a well-ordered set, such that the $\, \Omega_\lambda$ are pairwise disjoint. Define a binary relation $\, \leq$ on $\, \bigcup_{\lambda \in \Lambda} \Omega_\lambda$ as follows: for all $\alpha_1, \alpha_2 \in \bigcup_{\lambda \in \Lambda} \Omega_\lambda$, with $\alpha_1 \in \Omega_{\lambda_1}$ and $\alpha_2 \in \Omega_{\lambda_2}$, let $\alpha_1 \leq \alpha_1$ if either $\lambda_1 = \lambda_2$ and $\alpha_1 \leq_{\lambda_1} \alpha_2$, or $\lambda_1 <_\Lambda \lambda_2$. Then $\, (\bigcup_{\lambda \in \Lambda} \Omega_\lambda, \leq)$ is a well-ordered set.
\end{lemma}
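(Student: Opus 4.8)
The plan is to verify the three defining properties of a well order for $(\bigcup_{\lambda\in\Lambda}\Omega_\lambda,\leq)$ directly, in the natural order: first that $\leq$ is a partial order, then that it is total, then that every non-empty subset has a least element. For reflexivity, antisymmetry, and transitivity I would simply chase through the case analysis on which $\Omega_\lambda$ the relevant elements lie in, using the fact that $\leq_\Lambda$ and each $\leq_\lambda$ is a partial order; for instance, transitivity of $\leq$ splits into the cases where the indices are all equal (use transitivity of $\leq_\lambda$), strictly increase at some point (use transitivity of $<_\Lambda$, together with the observation that $\lambda_1 <_\Lambda \lambda_2 \leq_\Lambda \lambda_3$ forces $\lambda_1 <_\Lambda \lambda_3$), and the mixed cases. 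Totality is immediate: given $\alpha_1\in\Omega_{\lambda_1}$ and $\alpha_2\in\Omega_{\lambda_2}$, either $\lambda_1 = \lambda_2$, in which case totality of $\leq_{\lambda_1}$ decides, or $\lambda_1\neq\lambda_2$, in which case totality of $\leq_\Lambda$ gives $\lambda_1 <_\Lambda \lambda_2$ or $\lambda_2 <_\Lambda \lambda_1$.

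The substantive point is the well-ordering condition. Given a non-empty subset $A\subseteq\bigcup_{\lambda\in\Lambda}\Omega_\lambda$, I would let $\Lambda_A = \{\lambda\in\Lambda \mid A\cap\Omega_\lambda\neq\emptyset\}$, which is a non-empty subset of $\Lambda$, and hence has a least element $\mu$ with respect to $\leq_\Lambda$. Then $A\cap\Omega_\mu$ is a non-empty subset of the well-ordered set $(\Omega_\mu,\leq_\mu)$, so it has a least element $\alpha$ with respect to $\leq_\mu$. I claim $\alpha$ is the least element of $A$ with respect to $\leq$: for any $\beta\in A$, say $\beta\in\Omega_\nu$, minimality of $\mu$ gives $\mu\leq_\Lambda\nu$; if $\mu <_\Lambda \nu$ then $\alpha\leq\beta$ by definition of $\leq$, and if $\mu=\nu$ then $\beta\in A\cap\Omega_\mu$, so $\alpha\leq_\mu\beta$, hence $\alpha\leq\beta$.

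I do not expect any real obstacle here; the lemma is a routine "lexicographic sum of well orders is a well order" statement, and the only thing requiring any care is the bookkeeping in the transitivity check and making sure the definition of $\leq$ is being applied in the right direction in each case (the excerpt's displayed definition has some evident typos — "$\alpha_1 \leq \alpha_1$" should read "$\alpha_1 \leq \alpha_2$" — which I would silently read correctly). The pairwise disjointness of the $\Omega_\lambda$ is used implicitly throughout: it guarantees that the index $\lambda$ with $\alpha\in\Omega_\lambda$ is well-defined, so that the case analysis is exhaustive and unambiguous.
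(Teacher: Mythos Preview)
Your proposal is correct and matches the paper's proof essentially line for line: the paper also dismisses reflexivity, antisymmetry, and transitivity as routine, derives totality from that of $\leq_\Lambda$ and the $\leq_\lambda$, and then proves the well-ordering condition by first finding the least $\lambda$ with $\Gamma\cap\Omega_\lambda\neq\emptyset$ and then the least element of that intersection. There is nothing to add.
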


\begin{proof}
It is routine to check that $\leq$ is reflexive, antisymmetric, and transitive. That $\leq$ is a total order then follows from its definition and the fact that $\leq_\Lambda$ and each $\leq_\lambda$ is a total order. Now, let $\Gamma$ be a nonempty subset of $\bigcup_{\lambda \in \Lambda} \Omega_\lambda$. Since $\Lambda$ is well-ordered, there is a least $\lambda \in \Lambda$ (with respect to $\leq_\Lambda$) such that $\Gamma \cap \Omega_\lambda \neq \emptyset$. Since $\Omega_\lambda$ is well-ordered, there is a least $\alpha \in \Omega_\lambda$ (with respect to $\leq_\lambda$) such that $\alpha \in \Gamma \cap \Omega_\lambda$. Then $\alpha$ must be the least element of $\Gamma$ with respect to $\leq$, which shows that $\leq$ is a well order.
\end{proof}

Throughout the paper $\Z$ will denote the set of the integers, $\Z^+$ the set of the positive integers, and $\N$ the set of the natural numbers (including $0$). We shall implicitly rely, whenever appropriate, on the fact that $\Z$ is totally ordered by its usual ordering, while $\Z^+$ and $\N$ are well-ordered.

\section{Triangularization}

We now extend the notion of ``upper-triangular" to transformations of an arbitrary vector space.

Given a subset $X$ of a vector space, we denote by $\langle X \rangle$ the subspace generated by $X$.

\begin{definition} \label{tri-def}
Let $k$ be a field, $V$ a $k$-vector space, $T \in \End_k(V)$, $\B$ a basis for $\, V$, and $\, \leq$ a partial ordering on $\B$. We say that $T$ is \emph{triangular with respect to} $(\B, \leq)$ if $T(v) \in \langle \{u \in \B \mid u \leq v\} \rangle$ for all $v \in \B$, and that $T$ is \emph{strictly triangular with respect to} $(\B, \leq)$ if $T(v) \in \langle \{u \in \B \mid u < v\} \rangle$ for all $v \in \B$.

If $T$ is triangular, respectively strictly triangular, with respect to some \emph{well-ordered} basis for $\, V$, then we say that $T$ is \emph{triangularizable}, respectively \emph{strictly triangularizable}.
\end{definition}

The condition that $T(v) \in \langle \{u \in \B \mid u \leq v\} \rangle$ for all $v \in \B$, in the above definition, is based on the defining property of upper-triangular matrices. We could have used the lower-triangular analog instead: $T(v) \in \langle \{u \in \B \mid v \leq u\} \rangle$ for all $v \in \B$. This would have resulted in an equivalent notion of ``triangular", for given a partially ordered basis $(\B, \leq)$, one has $T(v) \in \langle \{u \in \B \mid u \leq v\} \rangle$ for all $v \in \B$ if and only if $T(v) \in \langle \{u \in \B \mid v \leq' u\} \rangle$ for all $v \in \B$, where $\leq'$ is the opposite partial ordering of $\leq$. (I.e., $v \leq' u$ if and only if $u \leq v$, for all $u, v\in \B$.)

Even though we allowed $\leq$ to be an arbitrary partial order in the above definition, for occasional convenience, our primary interest will be in transformations that are triangular with respect to a well-ordered basis, which is why we reserve ``triangularizable" for that case alone. We focus on this case since, as we shall see, such transformations behave very much like triangular matrices, and to a significantly greater extent than transformations that are triangular with respect to a merely totally ordered basis. Still, it may be of interest to investigate other sorts of partially ordered bases in this context. For example, one could describe the transformations that are triangular with respect to orderings that are opposite to well orderings, which would produce a theory substantially different (and more messy) than the one presented here. (An instance of such a transformation can be found in Example~\ref{lower-eg}.) In order to avoid a lengthy digression, however, we shall not discuss such possibilities in detail.

Sometimes, we shall find it more convenient to index bases with ordered sets rather than ordering the bases themselves, when dealing with triangularization. Also, given $k$-vector spaces $W \subseteq V$ and a transformation $T \in \End_k(V)$ we say that $W$ is $T$-\emph{invariant} if $T(W) \subseteq W$.

In Theorem~\ref{tri-characterization} we shall give a number of equivalent characterizations of triangularizable transformations, but we require a few preliminary results.

\begin{lemma} \label{invar-space-lemma}
Let $k$ be a field, $V$ a $k$-vector space, and $T \in \End_k(V)$. Then $T$ is triangularizable if and only if there exists a well-ordered $\, ($by inclusion$)$ set of $T$-invariant subspaces of $\, V$, which is maximal as a well-ordered set of subspaces of $\, V$.
\end{lemma}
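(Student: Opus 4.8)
The plan is to prove both implications by transferring between a well-ordered basis and a well-ordered chain of invariant subspaces, using the following standard device: a maximal well-ordered set of subspaces of $V$ is exactly a ``complete flag'' in the sense that consecutive subspaces differ by at most one dimension, every subspace has an immediate successor obtained by adjoining a single basis vector (except possibly at the top), and the whole chain is continuous at limit stages (each limit member is the union of its predecessors). I would first record this characterization of maximal well-ordered sets of subspaces as a preliminary observation, independent of $T$: given any maximal well-ordered set $\mathcal{C}$ of subspaces of $V$, it must contain $0$ and $V$, and for each non-maximal $W \in \mathcal{C}$ its successor $W^+$ satisfies $\dim(W^+/W) = 1$ (otherwise one could insert an intermediate subspace, contradicting maximality), and each limit member is $\bigcup$ of its predecessors (otherwise insert that union). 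Conversely any such chain is maximal.

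For the forward direction, suppose $T$ is triangular with respect to a well-ordered basis $(\B, \leq)$, which I would index as $\B = \{v_\lambda \mid \lambda < \kappa\}$ by an ordinal $\kappa$ in an order-preserving way. For each ordinal $\mu \le \kappa$ set $W_\mu = \langle v_\lambda \mid \lambda < \mu \rangle$. Then $\{W_\mu \mid \mu \le \kappa\}$ is a well-ordered (by inclusion) set of subspaces, it is maximal by the observation above since consecutive terms differ by one basis vector and limit terms are unions, and each $W_\mu$ is $T$-invariant precisely because $T(v_\lambda) \in \langle v_\eta \mid \eta \le \lambda \rangle \subseteq W_\mu$ whenever $\lambda < \mu$. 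This gives the desired maximal well-ordered set of $T$-invariant subspaces.

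For the converse, suppose $\mathcal{C}$ is a maximal well-ordered set of $T$-invariant subspaces. I would enumerate $\mathcal{C} = \{W_\mu \mid \mu \le \kappa\}$ in increasing order by ordinals (well-ordered by inclusion means order-isomorphic to an ordinal), with $W_0 = 0$ and $W_\kappa = V$. Using the maximality observation, for each non-maximal $\mu$ pick $v_\mu \in W_{\mu+1} \setminus W_\mu$; then $\B = \{v_\mu \mid \mu < \kappa\}$ is a basis for $V$ — one checks $\langle v_\lambda \mid \lambda < \mu\rangle = W_\mu$ by transfinite induction, handling limit stages via the continuity of $\mathcal{C}$ — and ordering $\B$ by $v_\lambda \le v_\mu \iff \lambda \le \mu$ makes it a well-ordered basis. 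Finally $T$ is triangular with respect to $(\B, \le)$: since $W_{\mu+1}$ is $T$-invariant, $T(v_\mu) \in W_{\mu+1} = \langle v_\lambda \mid \lambda \le \mu \rangle$.

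The main obstacle is the bookkeeping at limit ordinals in the converse: one must verify that a maximal well-ordered chain of subspaces really is continuous (each limit member equals the union of its strict predecessors) and that the vectors $v_\mu$ chosen at successor stages genuinely span all of $V$ — i.e. that no ``gap'' appears at a limit stage forcing $\langle v_\lambda \mid \lambda < \mu\rangle$ to be a proper subspace of $W_\mu$. Both points follow from maximality (a gap or a discontinuity would let us insert a new subspace into $\mathcal{C}$), but stating and applying this cleanly is the crux; the rest is routine transfinite induction.
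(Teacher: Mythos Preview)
Your proposal is correct and follows essentially the same approach as the paper's proof: both directions pass between a well-ordered basis indexed by an ordinal and the associated chain $W_\mu = \langle v_\lambda \mid \lambda < \mu\rangle$, using maximality to force one-dimensional successor quotients and continuity at limit stages. The only differences are cosmetic---you package the flag characterization as a preliminary observation and index basis vectors by all $\mu<\kappa$ rather than by successor ordinals in the chain's index set, whereas the paper weaves the maximality argument inline and uses the latter convention.
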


\begin{proof}
Suppose that $T$ is triangularizable. Then there is a well-ordered set $(\Omega, \leq)$ and a basis $\B = \{v_\alpha \mid \alpha \in \Omega\}$ for $V$ such that $T(v_\alpha) \in \langle \{v_\beta \mid \beta \leq \alpha\} \rangle$ for all $\alpha \in \Omega$. Since every well-ordered set is order-isomorphic to an ordinal, we may assume that $\Omega$ is an ordinal. For each $\alpha \in \Omega$ set $V_\alpha = \langle \{v_\beta \mid \beta < \alpha\} \rangle$, where $V_0$ is understood to be the zero space ($0$ being the least element of $\Omega$). Then for all $\alpha_1, \alpha_2 \in \Omega$ we have $V_{\alpha_1} \subseteq V_{\alpha_2}$ if and only if $\alpha_1 \leq \alpha_2$.  Since $(\Omega, \leq)$ is well-ordered, it follows that $X=\{V_\alpha \mid \alpha \in \Omega^+\}$ is well-ordered by set inclusion, where $\Omega^+ = \Omega \cup \{\Omega\}$ is the successor of $\Omega$ and $V = V_{\Omega}$. Moreover, $T(v_\beta) \in V_\alpha$ for all $\alpha, \beta \in \Omega$ satisfying $\beta < \alpha$, from which it follows that each element of $X$ is $T$-invariant. It remains to show that $X$ is maximal. First, note that for each $\alpha \in \Omega^+$ we have $\bigcup_{\beta<\alpha} V_\beta \subseteq V_\alpha$, with equality if $\alpha$ is a limit ordinal, and $V_\alpha/(\bigcup_{\beta<\alpha} V_\beta)$ one-dimensional otherwise.

Now, let $W \subseteq V$ be a subspace that is comparable under set inclusion to $V_\alpha$ for each $\alpha \in \Omega^+$. Since $\Omega^+$ is well-ordered, there is a least $\alpha \in \Omega^+$ such that $W\subseteq V_\alpha$. Since $W$ is comparable to each element of $X$, from the choice of $V_\alpha$ it follows that $V_\beta \subset W$ for all $\beta < \alpha$. If $\alpha$ is a limit ordinal, then $V_\alpha = \bigcup_{\beta<\alpha} V_\beta \subseteq W$, and hence $W=V_\alpha \in X$. Otherwise, there is a $\beta \in \Omega^+$ such that $\alpha$ is the successor of $\beta$, and $V_\beta \subset W \subseteq V_\alpha$. But in this case $V_\alpha /V_\beta$ is one-dimensional, and therefore $W=V_\alpha \in X$ once again. Thus $X$ is a maximal well-ordered set of subspaces of $V$.

Conversely, suppose that there exists a well-ordered set $(\Omega, \leq)$, which we may assume to be an ordinal, and a set $X = \{V_\alpha \mid \alpha \in \Omega\}$ of $T$-invariant subspaces of $V$, such that $V_{\alpha_1} \subseteq V_{\alpha_2}$ if and only if $\alpha_1 \leq \alpha_2$ (for all $\alpha_1, \alpha_2 \in \Omega$), and $X$ is maximal as a well-ordered set of subspaces of $V$. Let $\alpha \in \Omega$ be any element. If $\alpha$ is a successor ordinal, with predecessor $\beta$, then $V_\alpha/V_\beta$ must be one-dimensional, by the maximality of $X$. If $\alpha$ is a limit ordinal, then for all $\beta < \alpha$ we have $V_\beta \subset \bigcup_{\gamma < \alpha} V_\gamma \subseteq V_\alpha$. Again, by the maximality of $X$, this implies that $\bigcup_{\gamma < \alpha} V_\gamma \in X$, and hence $\bigcup_{\gamma < \alpha} V_\gamma = V_\alpha$. Now for each $\beta \in \Omega$ with successor $\alpha \in \Omega$ let $v_\alpha \in V$ be such that $v_\alpha + V_\beta$ spans $V_\alpha/V_\beta$. Also, let $$\Gamma = \{\alpha \in \Omega \mid \alpha \text{ is a successor ordinal}\}.$$ As a subset of a well-ordered set, $\Gamma$ is itself well-ordered by (the restriction of) $\leq$. We claim that $\{v_\alpha \mid \alpha \in \Gamma\}$ is a basis for $V$ with respect to which $T$ is triangular.

Since for each $\alpha \in \Omega$ we have $V_\alpha = \langle \{v_\beta \mid \beta \leq \alpha, \beta \in \Gamma\}\rangle$, and since $X$ must contain $V$, by virtue of being maximal, if follows that $\{v_\alpha \mid \alpha \in \Gamma\}$ spans $V$. From the fact that the spaces $V_\alpha$ are distinct it also follows that $\{v_\alpha \mid \alpha \in \Gamma\}$ is linearly independent, and hence is a basis for $V$. Now let $\alpha \in \Gamma$ be any element. Then $T(v_\alpha) \in V_\alpha$, since $V_\alpha$ is $T$-invariant, and hence $T(v_\alpha) \in \langle \{v_\beta \mid \beta \leq \alpha, \beta \in \Gamma\}\rangle$. That is, $T$ is triangular with respect to $\{v_\alpha \mid \alpha \in \Gamma\}$.
\end{proof}

\begin{proposition} \label{upper-tri-prop}
Let $k$ be a field, $V$ a $k$-vector space, $(\B, \leq)$ a well-ordered basis for $\, V$, and $T \in \End_k(V)$ a transformation triangular with respect to $\B$. Then the following hold.
\begin{enumerate}
\item[$(1)$] If $\, W \subseteq V$ is a finite-dimensional subspace, then $\, W$ is contained in a finite-dimensional $T$-invariant subspace of $\, V$.
\item[$(2)$] There is a partial ordering $\, \preceq$ on $\B$ such that $T$ is triangular with respect to $\, (\B, \preceq)$ and $\, \{u \in \B \mid u \preceq v\}$ is finite for all $v\in \B$.
\end{enumerate} 
\end{proposition}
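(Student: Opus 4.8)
The plan is to attach to each basis vector a canonical ``reachable set'' of basis vectors that turns out to be finite, and then to use these sets for both parts. For $w \in V$, let $\supp(w)$ denote the finite set of $u \in \B$ occurring with nonzero coefficient when $w$ is written in the basis $\B$; triangularity of $T$ with respect to $\B$ says precisely that every element of $\supp(T(v))$ is $\leq v$. For $v \in \B$, I would define $C(v)$ to be the intersection of all subsets $A \subseteq \B$ such that $v \in A$ and $\supp(T(u)) \subseteq A$ for every $u \in A$. Since $\B$ itself is such a set and this property is stable under intersections, $C(v)$ is the smallest such set. Two observations are then immediate: $\langle C(v)\rangle$ is $T$-invariant (each spanning vector $u \in C(v)$ has $T(u) \in \langle\supp(T(u))\rangle \subseteq \langle C(v)\rangle$), and $C(v) \subseteq \{u \in \B \mid u \leq v\}$ (the set on the right contains $v$ and is closed under $u \mapsto \supp(T(u))$, by triangularity and transitivity of $\leq$, so it is one of the sets in the defining intersection).

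The key step, and the one place the well-ordering is essential, is to prove that $C(v)$ is finite for every $v \in \B$; this is the main obstacle. I would argue by contradiction: if some $C(v)$ is infinite, let $v_0$ be the $\leq$-least basis vector with $C(v_0)$ infinite, and write $\supp(T(v_0)) \setminus \{v_0\} = \{u_1, \dots, u_m\}$, noting $u_i < v_0$ for each $i$. The goal is then to show $C(v_0) = \{v_0\} \cup C(u_1) \cup \dots \cup C(u_m)$. The inclusion $\subseteq$ holds because the right-hand side contains $v_0$ and is closed under $u \mapsto \supp(T(u))$ --- for the element $v_0$ because $\supp(T(v_0)) \subseteq \{v_0, u_1, \dots, u_m\}$, and for an element of some $C(u_i)$ because $C(u_i)$ is closed --- so it is one of the sets defining $C(v_0)$. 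The inclusion $\supseteq$ holds because $C(v_0)$ contains $v_0$, hence (being closed) contains each $u_i \in \supp(T(v_0))$, hence contains the smallest closed set through $u_i$, namely $C(u_i)$. Now by minimality of $v_0$ each $C(u_i)$ is finite, so $C(v_0)$ is a finite union of finite sets, contradicting the choice of $v_0$. Hence every $C(v)$ is finite.

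With finiteness in hand, both parts are short. For (1): a finite-dimensional $W \subseteq V$ is contained in $\langle \B_0\rangle$ for some finite $\B_0 \subseteq \B$ (collect the basis vectors occurring in a spanning set of $W$), so $W \subseteq \langle \bigcup_{v \in \B_0} C(v)\rangle$, which is finite-dimensional (the union is finite) and $T$-invariant (a finite sum of the $T$-invariant subspaces $\langle C(v)\rangle$). For (2): define $u \preceq v$ to mean $u \in C(v)$. Reflexivity is $v \in C(v)$; antisymmetry follows because $u \preceq v$ implies $u \leq v$, so $u \preceq v \preceq u$ gives $u \leq v \leq u$, hence $u = v$; transitivity follows because $v \in C(w)$ implies $C(v) \subseteq C(w)$ (as $C(w)$ is a closed set containing $v$), so $u \in C(v) \subseteq C(w)$. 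For this partial order, $\{u \in \B \mid u \preceq v\} = C(v)$ is finite, and $T(v) \in \langle\supp(T(v))\rangle \subseteq \langle C(v)\rangle = \langle\{u \in \B \mid u \preceq v\}\rangle$, so $T$ is triangular with respect to $(\B, \preceq)$.
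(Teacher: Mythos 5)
Your argument is correct, and it is built on the same underlying objects as the paper's proof: your $C(v)$ is exactly the set $\{u \in \B \mid u \preceq v\}$ that the paper obtains by iterating supports (its sets $U_1 \subseteq U_2 \subseteq \cdots$ starting from $U_1=\{v\}$), and your relation $u \preceq v \Leftrightarrow u \in C(v)$ coincides with the paper's chain-of-nonzero-projections order, with the same verification that it is a partial order extending $\leq$ and that $T$ is triangular with respect to it. Where you genuinely differ is in the key finiteness step and in the logical organization. The paper proves part (1) first: starting from a finite set spanning $W$ it shows the increasing chain $U_1 \subseteq U_2 \subseteq \cdots$ of finite sets stabilizes, arguing by contradiction that otherwise the maximal elements of $U_i \setminus U_{i-1}$ form an infinite strictly descending sequence in $(\B,\leq)$; it then recycles this construction for part (2). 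You instead prove the per-vector finiteness of $C(v)$ by a least-counterexample (transfinite induction) argument: the $\leq$-least $v_0$ with $C(v_0)$ infinite satisfies $C(v_0)=\{v_0\}\cup C(u_1)\cup\cdots\cup C(u_m)$ with each $u_i < v_0$, contradicting minimality, and then both (1) and (2) fall out of this single claim (for (1) you take a finite union of the $T$-invariant spaces $\langle C(v)\rangle$). Your route buys a cleaner structural recursion --- $C(v)$ is determined by the $C(u)$ for $u$ in $\supp(T(v))$, no auxiliary sequence needs to be extracted, and the roles of (1) and (2) become transparent --- at the cost of the more concrete algorithmic picture of the $U_i$, which the paper reuses verbatim in part (2). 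Both proofs use the well-ordering exactly once and in essentially equivalent ways (least element of the set of counterexamples versus nonexistence of an infinite descending chain), so I would call your proof a valid and somewhat tidier reorganization rather than a new method.
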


\begin{proof}
(1) Let $U_1 \subseteq \B$ be a finite subset such that $W \subseteq \langle U_1 \rangle$. Now for each $i > 1$ ($i \in \Z^+$) define recursively $$U_i = \{v \in \B \mid \pi_v T(U_{i-1}) \neq 0\} \cup U_{i-1},$$ where $\pi_v \in \End_k(V)$ is the projection onto $\langle v \rangle$ with kernel $\langle \B \setminus \{v\} \rangle$. Since $U_1$ is finite, it follows by induction that every $U_i$ is finite. Also, we have $T(U_i) \subseteq \langle U_{i+1} \rangle$ for all $i \in \Z^+$.

We claim that the chain $$U_1 \subseteq U_2 \subseteq U_3 \subseteq \cdots$$ must stabilize after finitely many steps. If not, then for each $i \in \Z^+$ let $v_i \in \B$ be the maximal element, with respect to $\leq$, such that $v_i \in U_i \setminus U_{i-1}$ (where $U_0$ is understood to be the empty set). This is well-defined since each $U_i \setminus U_{i-1}$ is finite but nonempty. Then for each $i >1$, there exists $u \in U_{i-1} \setminus U_{i-2}$ such that $\pi_{v_i} T(u) \neq 0$, by the definition of $U_i$. Since $T$ is triangular with respect to $\B$, we have $T(u) \in \langle\{w \in \B \mid w\leq u\}\rangle$, from which it follows that $v_i \leq u \leq v_{i-1}$. Moreover, since $v_i \in U_i \setminus U_{i-1}$ and $v_{i-1} \in U_{i-1}$, we have $v_i < v_{i-1}$. Thus $$v_1 > v_2 > v_3 > \cdots$$ is an infinite strictly descending chain of elements of $\B$. This contradicts $\B$ being well-ordered, since $\{v_1, v_2, v_3, \dots \}$ has no least element. 

Hence there exists $n \in \Z^+$ such that $U_n = U_{n+1}$, and therefore $T(U_n) \subseteq \langle U_{n+1} \rangle = \langle U_n \rangle$. It follows that $T(\langle U_n \rangle) \subseteq \langle U_n \rangle$, where $W \subseteq \langle U_n \rangle$ and $\langle U_n \rangle$ is finite-dimensional, as desired.

(2) Given $u,v \in \B$, we write $u \preceq v$ if either $u=v$ or there exist $w_1, \dots, w_n \in \B$, where $w_1 = v$ and $w_n = u$, such that $$\pi_{w_{n}}T\pi_{w_{n-1}}T\pi_{w_{n-2}} \cdots \pi_{w_2}T\pi_{w_1} \neq 0.$$ (Note that this product being nonzero is equivalent to each of $\pi_{w_{n}}T\pi_{w_{n-1}}, \dots, \pi_{w_2}T\pi_{w_1}$ being nonzero, since the image of each of the projections involved is $1$-dimensional.) Let $v \in \B$, and let $U_1 = \{v\}$. Then defining $U_i$ for all $i >1$ as in the proof of (1), for any $u \in \B$ we have $u \preceq v$ if and only if $u \in U_i$ for some $i \in \Z^+$. Since the chain $$U_1 \subseteq U_2 \subseteq U_3 \subseteq \cdots$$ must stabilize after finitely many steps, and each $U_i$ is finite, it follows that $\{u \in \B \mid u \preceq v\}$ is finite for all $v\in \B$. It remains to show that $\preceq$ is a partial order.

The binary relation $\preceq$ is reflexive, by definition. To show that $\preceq$ is antisymmetric, first we note that given $u,v \in \B$, if $\pi_u T \pi_v \neq 0$, then $u \leq v$, since $T$ is triangular with respect to $(\B, \leq)$, and hence $u \preceq v$ implies that $u \leq v$. (I.e., $\leq$ extends $\preceq$.) Thus, if $u \preceq v$ and $v \preceq u$ for some $u, v \in \B$, then $u \leq v$ and $v \leq u$, from which it follows that $u=v$. Finally, to show that $\preceq$ is transitive, suppose that $u \preceq v$ and $v \preceq w$ for some $u, v, w \in \B$. Then there exist $x_1, \dots, x_n, y_1, \dots, y_m \in \B$, where $x_1 = v = y_m$, $x_n = u$, $y_1 = w$, such that $$\pi_{x_n}T\pi_{x_{n-1}} \cdots \pi_{x_2}T\pi_{x_1} \neq 0 \text{ and } \pi_{y_{m}}T\pi_{y_{m-1}} \cdots \pi_{y_2}T\pi_{y_1} \neq 0.$$ It follows that $$\pi_{u}T\pi_{x_{n-1}} \cdots \pi_{x_2}T(v) = au \text{ and } \pi_{v}T\pi_{y_{m-1}} \cdots \pi_{y_2}T(w) = bv$$ for some $a,b \in k \setminus \{0\}$. Therefore, $$\pi_{u}T\pi_{x_{n-1}} \cdots \pi_{x_2}T\pi_{v} T\pi_{y_{m-1}} \cdots \pi_{y_2}T(w) = abu \neq 0,$$ and hence $u \preceq w$, as required.
\end{proof}

\begin{lemma} \label{upper-tri-lemma}
Let $k$ be a field, $V$ a $k$-vector space, and $T \in \End_k(V)$. If $\, V = \bigcup_{i=1}^\infty \kernel (T^i)$, then $T$ is strictly triangularizable.
\end{lemma}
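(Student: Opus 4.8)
The plan is to build the required well-ordered basis directly from the filtration of $V$ by the kernels of the powers of $T$. Set $W_i = \kernel(T^i)$ for each $i \in \N$, so that $0 = W_0 \subseteq W_1 \subseteq W_2 \subseteq \cdots$, and by hypothesis $V = \bigcup_{i=1}^\infty W_i$. Each $W_i$ is $T$-invariant, and in fact $T(W_i) \subseteq W_{i-1}$ for every $i \geq 1$. For each $i \geq 1$ I would choose a subset $\B_i \subseteq W_i$ whose image in $W_i/W_{i-1}$ is a basis of that quotient. Since every element of $\B_i$ has nonzero image in $W_i/W_{i-1}$ it does not lie in $W_{i-1}$, whereas $\B_j \subseteq W_j \subseteq W_{i-1}$ for all $j < i$; hence the sets $\B_i$ are pairwise disjoint.

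First I would check that $\B := \bigcup_{i=1}^\infty \B_i$ is a basis for $V$, and more precisely that $\langle \B_1 \cup \cdots \cup \B_i \rangle = W_i$ for every $i \geq 1$. Spanning follows by induction on $i$ (each $W_i/W_{i-1}$ is spanned by the image of $\B_i$), together with $V = \bigcup_i W_i$; linear independence follows by taking any finite dependence relation among elements of $\B$, locating the largest index $i$ for which $\B_i$ is represented, projecting onto $W_i/W_{i-1}$ to force those coefficients to vanish, and then inducting downward. Next I would well-order the basis: by the well-ordering theorem fix a well order on each $\B_i$, and then, since $\Z^+$ is well-ordered and the $\B_i$ are pairwise disjoint well-ordered sets, apply Lemma~\ref{well-ord-lemma} to obtain a well order $\leq$ on $\B$ under which every element of $\B_i$ precedes every element of $\B_j$ whenever $i < j$.

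Finally I would verify strict triangularity: given $v \in \B$, say $v \in \B_i$, we have $T(v) \in T(W_i) \subseteq W_{i-1} = \langle \B_1 \cup \cdots \cup \B_{i-1} \rangle$, and every element of $\B_1 \cup \cdots \cup \B_{i-1}$ is strictly less than $v$ by construction, so $T(v) \in \langle \{u \in \B \mid u < v\} \rangle$. Thus $T$ is strictly triangular with respect to the well-ordered basis $(\B, \leq)$, i.e., strictly triangularizable. The only point requiring any care is the claim that $\B$ is a basis with $\langle \B_1 \cup \cdots \cup \B_i \rangle = W_i$; beyond that the argument is just bookkeeping with the filtration plus an appeal to Lemma~\ref{well-ord-lemma}. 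I do not expect a real obstacle here: this is essentially the infinite-dimensional analogue of refining the kernel filtration of a nilpotent matrix to put it in strictly upper-triangular form, the two enabling features being that the filtration is indexed by $\Z^+$ (so Lemma~\ref{well-ord-lemma} applies with $\Lambda = \Z^+$) and that it is exhaustive (so the graded pieces assemble into a basis of all of $V$).
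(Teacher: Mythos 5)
Your proposal is correct and follows essentially the same route as the paper's proof: choose lifts of bases of the quotients $\kernel(T^i)/\kernel(T^{i-1})$, assemble them into a basis of $V$, well-order each layer and concatenate the layers via Lemma~\ref{well-ord-lemma}, and use $T(\kernel(T^i)) \subseteq \kernel(T^{i-1})$ to get strict triangularity. The only cosmetic difference is that you spell out the verification that the union is a basis, which the paper treats as immediate.
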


\begin{proof}
For each $i \geq 1$ let $U_i \subseteq \ker (T^i)\setminus \ker (T^{i-1})$ be a linearly independent set such that $U_i + \ker (T^{i-1})$ is a basis for $\ker (T^i)/ \ker (T^{i-1}),$ where $U_i$ is possibly empty. Then $\bigcup_{i=1}^n U_i$ is a basis for $\ker(T^n)$, for each $n \in \Z^+$, and hence $\B = \bigcup_{i=1}^\infty U_i$ is a basis for $V$.

Now for each $i\in \Z^+$, let $\leq_i$ be a well-ordering on $U_i$ (chosen arbitrarily), and define a binary relation $\leq_{\B}$ on $\B$ as follows. Given $u_1, u_2 \in \B$, where $u_1 \in U_{i_1}$ and $u_2 \in U_{i_2}$ ($i_1,i_2 \in \Z^+$), let $u_1 \leq_{\B} u_2$ if either $i_1=i_2$ and $u_1 \leq_{i_1} u_2$, or $i_1 < i_2$. Then, by Lemma~\ref{well-ord-lemma}, $(\B, \leq_{\B})$ is well-ordered.

Finally, let $u \in \B$, and let $n \in \Z^+$ be such that $u \in U_n \, (\subseteq \ker(T^n))$. Then $$T (u) \in \kernel (T^{n-1}) = \bigg\langle \bigcup_{i=1}^{n-1} U_i \bigg\rangle \subseteq \langle \{v \in \B \mid v <_{\B} u\} \rangle,$$ by the definition of $\leq_{\B}$, which shows that $T$ is strictly triangular with respect to $\B$.
\end{proof}

We are now ready for our main result, which characterizes the triangularizable transformations of an arbitrary vector space. The conditions (1)--(3) in the statement generalize the corresponding ones in Theorem~\ref{cl-tri-thrm}, while condition (4) is a crude version of Theorem~\ref{cl-tri-thrm}(4).

We recall that given a commutative ring $R$, an $R$-module $M$ is called \emph{locally artinian} if every finitely-generated $R$-submodule of $M$ is artinian.

\begin{theorem} \label{tri-characterization}
Let $k$ be a field, $V$ a $k$-vector space, and $T \in \End_k(V)$. Then the following are equivalent.
\begin{enumerate}
\item[$(1)$] $T$ is triangularizable.
\item[$(2)$] For every finite-dimensional subspace $\, W$ of $\, V$ there is a polynomial $p(x) \in k[x]\setminus k$ that factors into linear terms in $k[x]$, such that $p(T)$ annihilates $\, W$.
\item[$(3)$] There exists a well-ordered set of $T$-invariant subspaces of $\, V$, which is maximal as a well-ordered set of subspaces of $\, V$.
\item[$(4)$] $V = \bigoplus_{a \in k} \bigcup_{i=1}^\infty \ker((T-aI)^i)$, where $I \in \End_k(V)$ is the identity transformation.
\item[$(5)$] There is a partially ordered basis $\, (\B, \preceq)$ for $\, V$ such that $T$ is triangular with respect to $\, (\B, \preceq)$ and $\, \{u \in \B \mid u \preceq v\}$ is finite for all $v\in \B$.
\end{enumerate}
Moreover, if $k$ is algebraically closed, then these are also equivalent to the following.
\begin{enumerate}
\item[$(6)$] Every finite-dimensional subspace of $\, V$ is contained in a finite-dimensional $T$-invariant subspace of $\, V$.
\item[$(7)$] $V$ is locally artinian, when viewed as a $k[x]$-module, where $x$ acts on $\, V$ as $T$.
\end{enumerate}
\end{theorem}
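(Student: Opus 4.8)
The plan is to run the cycle $(1)\Rightarrow(5)\Rightarrow(2)\Rightarrow(4)\Rightarrow(1)$ for the first five conditions, to note that $(1)\Leftrightarrow(3)$ is nothing but Lemma~\ref{invar-space-lemma}, and then, under the extra hypothesis that $k$ is algebraically closed, to graft $(6)$ onto this cycle via Proposition~\ref{upper-tri-prop} and Cayley--Hamilton and to prove $(6)\Leftrightarrow(7)$ by elementary module theory. The implication $(1)\Rightarrow(5)$ is immediate: triangularizability means $T$ is triangular with respect to \emph{some} well-ordered basis $\B$, and then Proposition~\ref{upper-tri-prop}(2) produces the partial ordering with finite principal down-sets demanded by $(5)$. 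For $(5)\Rightarrow(2)$ I would, given a finite-dimensional $W$, pick basis vectors $v_1,\dots,v_n$ with $W\subseteq\langle v_1,\dots,v_n\rangle$ and set $U=\bigcup_{i}\{u\in\B\mid u\preceq v_i\}$; this set is finite, and transitivity of $\preceq$ together with triangularity shows $\langle U\rangle$ is a finite-dimensional $T$-invariant subspace containing $W$ on which $T$ is triangular with respect to the finite poset $(U,\preceq)$. Extending $\preceq$ to a total (hence well-) order on $U$ presents $T|_{\langle U\rangle}$ as an upper-triangular matrix, and the product of $x$ minus its diagonal entries is the required split annihilator of $W$ — this last move is exactly the finite-dimensional implication $(1)\Rightarrow(2)$ of Theorem~\ref{cl-tri-thrm}.

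For $(2)\Rightarrow(4)$ I would first observe that directness of $\sum_{a\in k}\bigcup_{i}\ker((T-aI)^i)$ needs no hypothesis at all: for any finitely many distinct $a_j$ and exponents $m_j$ the polynomials $(x-a_j)^{m_j}$ are pairwise relatively prime, so Lemma~\ref{lemma:eigenspaces} identifies $\ker\bigl(\prod_j(T-a_jI)^{m_j}\bigr)$ with the internal direct sum $\bigoplus_j\ker((T-a_jI)^{m_j})$, and letting the $m_j$ grow gives directness of the generalized eigenspaces. Condition $(2)$ then provides, for each $v\in V$, a split polynomial $p(x)=c\prod_j(x-a_j)^{e_j}$ with $p(T)(v)=0$, so $v\in\ker\bigl(\prod_j(T-a_jI)^{e_j}\bigr)=\bigoplus_j\ker((T-a_jI)^{e_j})$ by the same lemma; hence $V$ is the direct sum of its generalized eigenspaces, which is $(4)$.

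The implication $(4)\Rightarrow(1)$ is where the real work lies. Each generalized eigenspace $V_a=\bigcup_i\ker((T-aI)^i)$ is $T$-invariant, and $N_a:=(T-aI)|_{V_a}$ satisfies $V_a=\bigcup_i\ker(N_a^i)$, so Lemma~\ref{upper-tri-lemma} strictly triangularizes $N_a$ over some well-ordered basis $(\B_a,\leq_a)$ of $V_a$; adding back $aI$ makes $T|_{V_a}$ triangular with respect to that same basis. I would then well-order the scalar set $k$ (by the well-ordering theorem) and apply Lemma~\ref{well-ord-lemma} to concatenate the $(\B_a,\leq_a)$ into a single well-ordered basis $(\B,\leq)$ of $V=\bigoplus_{a\in k}V_a$; since $\leq$ restricts to $\leq_a$ on each block $\B_a$, block-wise triangularity assembles into triangularity of $T$ with respect to $(\B,\leq)$. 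This reassembly of (potentially infinitely many) well-ordered bases into one is the step I expect to require the most care, although Lemmas~\ref{well-ord-lemma} and~\ref{upper-tri-lemma} are designed exactly for it. Combined with Lemma~\ref{invar-space-lemma} for $(1)\Leftrightarrow(3)$, this closes the equivalence of $(1)$--$(5)$.

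Finally, suppose $k$ is algebraically closed. Proposition~\ref{upper-tri-prop}(1) gives $(1)\Rightarrow(6)$, and $(6)\Rightarrow(2)$ follows by enclosing a finite-dimensional $W$ in a finite-dimensional $T$-invariant $W'$ and taking the characteristic polynomial of $T|_{W'}$, which splits into linear factors over $k$ and annihilates $W'\supseteq W$ by Cayley--Hamilton; thus $(6)$ joins the earlier cycle. For $(6)\Leftrightarrow(7)$ — which needs no hypothesis on $k$ — regard $V$ as a $k[x]$-module with $x$ acting as $T$. If $(6)$ holds, any finitely generated submodule $M=k[T]v_1+\cdots+k[T]v_m$ lies inside a finite-dimensional $T$-invariant $W'$, whose $k[x]$-submodules are $k$-subspaces, so $W'$, and with it $M$, has finite length and in particular is artinian. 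Conversely, if $(7)$ holds, then for a finite-dimensional $W=\langle v_1,\dots,v_m\rangle$ the submodule $M=k[T]v_1+\cdots+k[T]v_m$ is finitely generated, hence artinian; being finitely generated over the Noetherian ring $k[x]$ it then has finite length, and since every simple $k[x]$-module is finite-dimensional over $k$, $M$ is a finite-dimensional $T$-invariant subspace containing $W$, which is $(6)$.
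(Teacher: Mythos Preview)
Your proposal is correct and follows essentially the same route as the paper: the cycle $(1)\Rightarrow(5)\Rightarrow(2)\Rightarrow(4)\Rightarrow(1)$ via Proposition~\ref{upper-tri-prop}, the order-extension principle, Lemma~\ref{lemma:eigenspaces}, and Lemmas~\ref{upper-tri-lemma} and~\ref{well-ord-lemma}, together with Lemma~\ref{invar-space-lemma} for $(1)\Leftrightarrow(3)$ and Cayley--Hamilton for $(6)\Rightarrow(2)$, matches the paper's argument almost step for step. The only cosmetic difference is in $(7)\Rightarrow(6)$, where the paper invokes the structure theorem for finitely generated modules over a PID to force the free rank to be zero, while you use the equivalent observation that a finitely generated artinian module over a Noetherian ring has finite length with finite-dimensional simple factors; both arguments are standard and yield the same conclusion.
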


\begin{proof}
By Proposition~\ref{upper-tri-prop}, (1) implies (5) and (6). Also, (1) and (3) are equivalent, by Lemma~\ref{invar-space-lemma}. We shall prove that $(5) \Rightarrow (2) \Rightarrow (4) \Rightarrow (1)$, and then treat (6) and (7) at the end.

$(5) \Rightarrow (2)$ Let $(\B, \preceq)$ be as in (5), and let $W$ be a finite-dimensional subspace of $V$. We can find a finite subset $X$ of $\B$ such that $W \subseteq \langle X \rangle$. Then, by hypothesis, the set $$Y = \{u \in \B \mid \exists v \in X \, (u \preceq v)\}$$ is finite. Since $\preceq$ is transitive, for all $u \in Y$ and $v \in \B$ such that $v \preceq u$, we have $v \in Y$. Hence, the assumption that $T$ is triangular with respect to $(\B, \preceq)$ implies that $\langle Y \rangle$ is $T$-invariant. By the order-extension principle, the restriction of $\preceq$ to $Y \subseteq \B$ can be extended to a total order $\leq$ on $Y$. Then $$T(v) \in \langle \{u \in Y \mid u \preceq v\} \rangle \subseteq \langle \{u \in Y \mid u \leq v\} \rangle$$ for all $v \in Y$. Hence the restriction of $T$ to $\langle Y \rangle$ is triangular with respect to $(Y, \leq)$, and so can be represented as a (finite) upper-triangular matrix. Therefore, by Theorem~\ref{cl-tri-thrm}, there is a polynomial $p(x) \in k[x]\setminus k$ that factors into linear terms in $k[x]$, such that $p(T)$ annihilates $\langle Y \rangle$ and hence also $W \subseteq \langle X \rangle \subseteq \langle Y \rangle$, proving (2).

$(2) \Rightarrow (4)$ Suppose that (2) holds, and let $P \subseteq k[x]\setminus k$ be the subset consisting of all the polynomials that factor into linear terms. Then $$V = \bigcup_{p \in P} \kernel (p(T)) = \bigoplus_{a \in k} \bigcup_{i=1}^\infty \ker((T-aI)^i),$$ by Lemma~\ref{lemma:eigenspaces}, and hence (4) holds. 

$(4) \Rightarrow (1)$ Suppose that (4) holds. Upon well-ordering $k$, by Lemma~\ref{well-ord-lemma}, to prove (1), it is enough to show that for each $a \in k$ there is a well-ordered basis for $\bigcup_{i=1}^\infty \ker((T-aI)^i)$ with respect to which $T$ is triangular (when restricted to the $T$-invariant subspace $\bigcup_{i=1}^\infty \ker((T-aI)^i)$ of $V$). Thus, let us assume that $V = \bigcup_{i=1}^\infty \ker((T-aI)^i)$ for some $a \in k$, and let $S = T-aI$. Then, by Lemma~\ref{upper-tri-lemma}, there is a well-ordered basis $(\B, \leq)$ for $V$, with respect to which $S$ is triangular. Thus, for any $v \in \B$ we have $$T(v) = S(v) + av \in \langle \{u \in \B \mid u \leq v\} \rangle,$$ showing that $T$ is triangular with respect to $(\B, \leq)$. 

We have shown that (1)--(5) are equivalent. Next, let us suppose that $k$ is algebraically closed and that (6) holds, and show that (2) also holds. Let $W$ be a finite-dimensional subspace of $V$. Then, by (6), there is a finite-dimensional $T$-invariant subspace $W'$ of $V$ containing $W$. Viewing the restriction of $T$ to $W'$ as a (finite) matrix, there is a polynomial $p(x) \in k[x]\setminus k$ such that $p(T)$ annihilates $W'$ (e.g., by the Cayley-Hamilton theorem). In particular, $p(T)$ annihilates $W$. Since $k$ is algebraically closed, $p(x)$ factors into linear terms in $k[x]$, showing that (2) holds. Thus, when $k$ is algebraically closed, (1)--(6) are equivalent.

To conclude the proof, we shall show that (6) and (7) are equivalent. Thus suppose that (6) holds, and let $M$ be a finitely-generated $k[x]$-submodule of $V$, where $x$ acts as $T$. Let $W \subseteq M$ be a finite set such that $M=k[x]W$. Then $W$ is contained in a finite-dimensional $T$-invariant subspace $M'$ of $V$, by (6). But $T$-invariant subspaces of $V$ are precisely the $k[x]$-submodules of $V$, which shows that $M$ is contained in the finite-dimensional $k[x]$-submodule $M'$ of $V$. Thus $M$ is finite-dimensional as a $k$-vector space, and hence artinian, proving (7).

Conversely, suppose that (7) holds, and let $W$ be a finite-dimensional subspace of $V$. Then, by (7), the $k[x]$-submodule $M=k[x]W$ of $V$ is artinian. Since $M$ is a $T$-invariant subspace of $V$, to conclude that (6) holds it suffices to show that $M$ is finite-dimensional. But since $k[x]$ is a principal ideal domain and $M$ is a finitely-generated $k[x]$-module, $$M \cong k[x]^r \oplus k[x]/\langle f_1(x)\rangle \oplus \cdots \oplus k[x]/\langle f_n(x)\rangle,$$ where $r \in \N$, $f_1(x), \dots, f_n(x) \in k[x]\setminus \{0\}$, and $\langle f_i(x)\rangle$ denotes the ideal of $k[x]$ generated by $f_i(x)$. (See, e.g.,~\cite[Section 12.1, Theorem 5]{DF}.) Since $M$ is artinian, we must have $r=0$, and hence $M$ is finite-dimensional as a $k$-vector space, giving the desired conclusion.
\end{proof}

As mentioned in the Introduction, in the literature on bounded linear operators on Banach spaces, a transformation $T$ is said to be ``triangularizable" if there is a chain (i.e., totally ordered set) of $T$-invariant subspaces of the Banach space which is maximal as a chain of subspaces (see~\cite[Definition 7.1.1]{RR}). That is, for such operators, condition ($3'$) from Theorem~\ref{cl-tri-thrm} is used to generalize the notion of ``triangular" from finite-dimensional spaces to infinite-dimensional ones. By using the stronger condition (3) instead (which, by the previous theorem, is equivalent to $T$ being triangularizable, as we have defined the term) in our generalization of ``triangular" we acquire much greater control over the behavior of transformations, as the next example demonstrates. 

To facilitate the discussion, we say that a transformation $T$ of a vector space $V$ is \emph{chain-triangularizable} if there is a chain of $T$-invariant subspaces of $V$, which is maximal as a chain of subspaces of $V$.

\begin{example} \label{weak-tri-eg}
Let $k$ be a field and $V$ a $k$-vector space with basis $\{v_i \mid i \in \Z\}$. Define $T \in \End_k(V)$ by $T(v_i) = v_{i-1}$ for each $i \in \Z$, and extend linearly to all of $V$.  Also for each $i \in \Z$ let $V_i = \langle \{v_j \mid j \leq i\} \rangle$. Then $$\cdots \subseteq V_{-1} \subseteq V_{0} \subseteq V_{1} \subseteq \cdots$$ is a maximal chain of subspaces of $V$ (since every $V_i/V_{i-1}$ is $1$-dimensional, $V = \bigcup_{i\in \Z} V_i$, and $0 = \bigcap_{i\in \Z} V_i$), each $T$-invariant. Thus $T$ is chain-triangularizable. However, $T$ satisfies none of the seven conditions in Theorem~\ref{tri-characterization}. To see this, let $W = \langle v_0 \rangle$. Then any $T$-invariant subspace of $V$ that contains $W$ must contain $v_{-1}, v_{-2}, \dots$, and hence also $V_0$. Therefore $T$ does not satisfy condition (6) in Theorem~\ref{tri-characterization}, and is hence not triangularizable, by Proposition~\ref{upper-tri-prop}. It follows that $T$ does not satisfy any of the conditions (1)--(7) in Theorem~\ref{tri-characterization}. \hfill $\Box$
\end{example}

Let us next derive a useful consequence of Theorem~\ref{tri-characterization}.

Given $k$-vector spaces $W\subseteq V$ and a transformation $T \in \End_k(V)$, we denote by $T|_W$ the restriction of $T$ to $W$.

\begin{corollary}\label{restrict-cor}
Let $k$ be a field, $V$ a $k$-vector space, $T \in \End_k(V)$ triangular with respect to some well-ordered basis for $\, V$, and $\, W\subseteq V$ a $T$-invariant subspace. 
\begin{enumerate}
\item[$(1)$] $T|_W$ is triangular with respect to some well-ordered basis for $\, W$.
\item[$(2)$] Let $\overline{T} \in \End_k(V/W)$ be the transformation defined by $\overline{T}(v+W)=T(v)+W$. Then $\overline{T}$ is triangular with respect to some well-ordered basis for $\, V/W$.
\end{enumerate}
\end{corollary}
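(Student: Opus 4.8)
The plan is to reduce both statements to condition~(2) of Theorem~\ref{tri-characterization}, which characterizes triangularizable transformations by the property that every finite-dimensional subspace is annihilated by $p(T)$ for some $p(x) \in k[x]\setminus k$ that factors into linear terms over $k$. This property passes painlessly to invariant subspaces and to quotients, because for a $T$-invariant subspace $W \subseteq V$ one has $p(T|_W) = p(T)|_W$, and the transformation induced by $p(T)$ on $V/W$ is precisely $p(\overline{T})$. Since $T$ is triangular with respect to a well-ordered basis, it is triangularizable, so it satisfies condition~(2) of Theorem~\ref{tri-characterization}, and I will use this as the starting point for both parts.

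For part~(1), I would take an arbitrary finite-dimensional subspace $W'$ of $W$ and regard it as a finite-dimensional subspace of $V$. By Theorem~\ref{tri-characterization}, there is a polynomial $p(x) \in k[x]\setminus k$ factoring into linear terms such that $p(T)$ annihilates $W'$. Because $W$ is $T$-invariant, $p(T)$ restricts to $p(T|_W)$ on $W$, so $p(T|_W)$ annihilates $W'$ as well. Since $W'$ was arbitrary, $T|_W \in \End_k(W)$ satisfies condition~(2) of Theorem~\ref{tri-characterization}, hence $T|_W$ is triangularizable, i.e.\ triangular with respect to some well-ordered basis for $W$.

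For part~(2), I would take an arbitrary finite-dimensional subspace $\overline{U}$ of $V/W$, choose finitely many vectors of $V$ whose images under the quotient map $V \to V/W$ span $\overline{U}$, and let $W'$ be the (finite-dimensional) subspace of $V$ that they span; then the image of $W'$ under the quotient map contains $\overline{U}$. Again by Theorem~\ref{tri-characterization} there is $p(x) \in k[x]\setminus k$ factoring into linear terms with $p(T)$ annihilating $W'$. Passing to $V/W$, the transformation $p(\overline{T})$ annihilates the image of $W'$, and in particular annihilates $\overline{U}$. Thus $\overline{T}$ satisfies condition~(2) of Theorem~\ref{tri-characterization} and is therefore triangularizable.

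I do not expect a genuine obstacle here; the only point that needs a moment's care is that condition~(2) demands that the polynomial factor into \emph{linear} terms over $k$, but this is a property of the polynomial $p(x)$ itself, independent of whether one evaluates it at $T$, at $T|_W$, or at $\overline{T}$, so it is preserved automatically. One could instead argue directly from Definition~\ref{tri-def} by restricting, respectively quotienting, a well-ordered triangularizing basis of $V$, but that requires choosing the basis so that a subfamily spans $W$, which is a slightly more delicate bookkeeping exercise; routing through condition~(2) sidesteps it cleanly.
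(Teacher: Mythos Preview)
Your proposal is correct and follows essentially the same approach as the paper: both parts are reduced to condition~(2) of Theorem~\ref{tri-characterization}, using that $p(T)|_W = p(T|_W)$ for the restriction and that $p(\overline{T})$ is the map induced by $p(T)$ on the quotient. The paper additionally spends a line verifying that $\overline{T}$ is well-defined, which you omit, but otherwise the arguments are the same.
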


\begin{proof}
(1) Let $U \subseteq W$ be a finite-dimensional subspace. Since $U \subseteq V$ and $T$ is triangularizable, by Theorem~\ref{tri-characterization}, there is a polynomial $p(x) \in k[x]\setminus k$ that factors into linear terms in $k[x]$, such that $p(T)$ annihilates $U$. Since $T(W) \subseteq W$, we have $p(T)|_W = p(T|_W)$, and hence $p(T|_W)$ annihilates $U$. Therefore, by Theorem~\ref{tri-characterization}, $T|_W$ is triangular with respect to some well-ordered basis for $W$.

(2) First, note that $\overline{T}$ is well-defined. For if $v_1+W=v_2+W$ for some $v_1, v_2 \in V$, then $v_1-v_2 =w$ for some $w\in W$. Hence $$\overline{T}(v_1+W) = T(v_1)+W = T(v_2)+T(w)+W = T(v_2)+W = \overline{T}(v_2+W).$$ It is routine to verify that $\overline{T}$ is also linear.

Now, let $W \subseteq U \subseteq V$ be a subspace such that $U/W$ is finite-dimensional. Then there exist $u_1, \dots, u_n \in U$ such that $U/W = \langle u_1+W, \dots, u_n+W\rangle$. Since $T$ is triangularizable, by Theorem~\ref{tri-characterization}, there is a polynomial $p(x) \in k[x]\setminus k$ that factors into linear terms in $k[x]$, such that $p(T)$ annihilates $\langle u_1, \dots, u_n\rangle \subseteq V$. Then for any $i \in \{1, \dots, n\}$ we have $$p(\overline{T})(u_i+W) = p(T)(u_i)+W = W,$$ showing that $p(\overline{T})$ annihilates $U/W$. Thus, $\overline{T}$ is triangular with respect to some well-ordered basis for $V/W$, by Theorem~\ref{tri-characterization}.
\end{proof}

To complement our description of triangularizable transformations we also give a more specialized description of diagonalizable ones. This is part of~\cite[Proposition 4.13]{IMR}, but we present a more direct proof here. 

\begin{proposition}\label{prop:operator}
Let $k$ be a field, $V$ a $k$-vector space, and $T \in \End_k(V)$. Then the following are equivalent.
\begin{itemize}
\item[$(1)$] $T$ is diagonalizable. $($I.e., there is a basis for $V$ consisting of eigenvectors of $T$.$)$
\item[$(2)$] For every finite-dimensional subspace $\, W$ of $\, V$ there is a polynomial $p(x) \in k[x]\setminus k$ that factors into \emph{distinct} linear terms in $k[x]$, such that $p(T)$ annihilates $\, W$.
\end{itemize}
\end{proposition}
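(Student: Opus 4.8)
The plan is to prove the two implications separately; the real content of $(2)\Rightarrow(1)$ is almost entirely Lemma~\ref{lemma:eigenspaces}. For $(1)\Rightarrow(2)$, fix a basis $\B$ of $V$ consisting of eigenvectors of $T$ and a finite-dimensional subspace $W\subseteq V$. Choose $v_1,\dots,v_m\in\B$ with $W\subseteq\langle v_1,\dots,v_m\rangle$, let $a_j\in k$ be the eigenvalue of $v_j$, and let $b_1,\dots,b_r$ be the distinct elements among $a_1,\dots,a_m$ (we may assume $r\geq 1$, taking $p(x)=x$ if $W=0$). Setting $p(x)=(x-b_1)\cdots(x-b_r)$, each $v_j$ is annihilated by the factor $x-a_j$ of $p(T)$, and since the factors of $p(T)$ commute, $p(T)$ annihilates every $v_j$, hence all of $W$. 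As $p$ factors into distinct linear terms, this gives $(2)$. This step is routine.

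For $(2)\Rightarrow(1)$, I will show $V=\bigoplus_{a\in k}\kernel(T-aI)$; a basis of $V$ consisting of eigenvectors is then obtained by taking the union of bases of the individual eigenspaces $\kernel(T-aI)$. To see that these eigenspaces span $V$, let $v\in V$ and apply $(2)$ to the finite-dimensional subspace $W=\langle v\rangle$: there is $p(x)=(x-a_1)\cdots(x-a_n)$ with the $a_i$ pairwise distinct and $p(T)v=0$. Since distinct monic linear polynomials are pairwise relatively prime in $k[x]$, Lemma~\ref{lemma:eigenspaces} (with $f_i(x)=x-a_i$) yields $v\in\kernel(p(T))=\bigoplus_{i=1}^n\kernel(T-a_iI)$, so $v$ lies in $\sum_{a\in k}\kernel(T-aI)$. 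That this sum is direct follows by applying Lemma~\ref{lemma:eigenspaces} once more to an arbitrary finite subcollection of the eigenspaces. Hence $V=\bigoplus_{a\in k}\kernel(T-aI)$, as claimed, and $T$ is diagonalizable.

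The only point needing attention is that $k$, and hence the set of eigenvalues of $T$, may be infinite; but both the spanning and the directness claims reduce to finite subfamilies of eigenspaces, each of which is handled directly by Lemma~\ref{lemma:eigenspaces}, so I do not anticipate a genuine obstacle. (One could alternatively invoke Theorem~\ref{tri-characterization} to deduce that $T$ is triangularizable and then argue that condition $(2)$ forces every generalized eigenspace $\bigcup_{i=1}^\infty\kernel((T-aI)^i)$ to equal the ordinary eigenspace $\kernel(T-aI)$, but routing through Lemma~\ref{lemma:eigenspaces} directly is shorter and self-contained.)
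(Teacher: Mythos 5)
Your proof is correct and follows essentially the same route as the paper: the $(1)\Rightarrow(2)$ direction is the identical argument with the product of $(x-b_i)$ over the distinct eigenvalues of a finite spanning set of eigenvectors, and $(2)\Rightarrow(1)$ rests, as in the paper, on Lemma~\ref{lemma:eigenspaces} applied to the eigenspaces $\kernel(T-aI)$. The paper merely states the conclusion more tersely (the union of bases of the eigenspaces is a basis of $V$ ``by Lemma~\ref{lemma:eigenspaces}''), while you spell out the reduction of both spanning and directness to finite subfamilies; the content is the same.
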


\begin{proof}
Suppose that $T$ is diagonalizable, and let $\B$ be a basis for $V$ consisting of eigenvectors of $T$. To prove (2), let $W \subseteq V$ be a finite-dimensional subspace. Then we can find $v_1, \dots, v_n \in \B$ such that $W \subseteq \langle v_1, \dots, v_n \rangle$. By hypothesis, for each $i \in \{1, \dots, n\}$ there exists $a_i \in k$ such that $T(v_i) = a_i v_i$. Let $a_1, \dots, a_l$ be the distinct elements of $\{a_1, \dots, a_n\}$ (upon reindexing, if necessary), and set $S = (T-a_1I)\cdots (T-a_lI)$, where $I \in \End_k(V)$ the identity transformation. Then, since the factors $T-a_iI$ commute with each other, $S(\langle v_1, \dots, v_n \rangle) = 0$. Thus letting $p(x) = (x-a_1)\cdots (x-a_l)$, we have $p(T)(W) = 0$.

Conversely, suppose that (2) holds, for each $a \in k$ let $\B_a$ be a basis for $\kernel (T-a I)$, and let $\B = \bigcup_{a \in k} \B_a$. Then, by Lemma~\ref{lemma:eigenspaces}, $\B$ is a basis for $V$, and clearly $\B$ consists of eigenvectors of $T$.
\end{proof}

\section{Simultaneous Triangularization}

Our next goal is to show that any finite commuting collection of triangularizable transformations is triangular with respect to a common well-ordered basis. This generalizes the classical fact that any commuting collection of triangularizable transformations of a finite-dimensional vector space is upper-triangular with respect to some basis for that vector space.

The following notation and observations will be useful.

\begin{definition}
Given a ring $R$ and a subset $X \subseteq R$ we denote by $C_R(X)$ $($or $C(X)$, if there is no danger of ambiguity$)$ the \emph{centralizer} $($or \emph{commutant}$)$  $$\{r \in R \mid rx = xr \mbox{ for all } x \in X\}$$ of $X$ in $R$. Given $r \in R$ we shall also write $C_R(r)$ to mean $C_R(\{r\})$.
\end{definition}

\begin{lemma}\label{nontriv-invar-subspace}
Let $k$ be a field, $V$ a nonzero $k$-vector space, and $T \in \End_k(V)$ triangularizable. Then there exists $a \in k$ such that $\, W = \ker(T-aI)$ is nonzero, where $I \in \End_k(V)$ is the identity transformation. Moreover any such $\, W$ satisfies $C(T)(W) \subseteq W$.
\end{lemma}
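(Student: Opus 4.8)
The plan is to extract a nonzero eigenspace directly from a triangularizing basis, and then to observe that the ``moreover'' clause is a purely formal property of eigenspaces. First I would fix a well-ordered basis $(\B, \leq)$ for $V$ with respect to which $T$ is triangular; such a basis exists because $T$ is triangularizable. Since $V \neq 0$, the set $\B$ is nonempty, and being well-ordered it has a least element $v_0$. By triangularity, $T(v_0) \in \langle \{u \in \B \mid u \leq v_0\} \rangle = \langle v_0 \rangle$, so $T(v_0) = a v_0$ for some $a \in k$. As basis vectors are nonzero, $v_0 \neq 0$, and hence $v_0 \in \ker(T - aI)$, so $W = \ker(T - aI)$ is a nonzero subspace, proving the first assertion. (Alternatively one could invoke condition $(4)$ of Theorem~\ref{tri-characterization}: since $V \neq 0$, some summand $\bigcup_{i=1}^\infty \ker((T-aI)^i)$ is nonzero; choosing the least $i$ with $\ker((T-aI)^i) \neq 0$ and a nonzero $v$ in it, the vector $(T-aI)^{i-1}(v)$ lies in $\ker(T-aI) \setminus \{0\}$. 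The argument via the least basis vector is shorter and self-contained.)

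For the second assertion, let $a \in k$ be any scalar with $W = \ker(T - aI) \neq 0$, let $S \in C(T)$, and let $w \in W$. Since $S$ commutes with $T$ it commutes with $T - aI$, so $(T - aI)(S(w)) = S((T-aI)(w)) = S(0) = 0$, i.e.\ $S(w) \in \ker(T - aI) = W$; hence $C(T)(W) \subseteq W$. I do not anticipate any genuine obstacle here. The only point requiring care is that $v_0 \neq 0$, so that the equation $T(v_0) = a v_0$ actually exhibits $a$ as an eigenvalue rather than the trivial identity $0 = 0$; and the ``moreover'' clause uses nothing about triangularizability, being valid for the $a$-eigenspace of an arbitrary transformation that commutes with $T$.
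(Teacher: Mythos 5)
Your proposal is correct and follows essentially the same route as the paper: take the least element of a triangularizing well-ordered basis to produce an eigenvector, and then use commutation with $T-aI$ to show the eigenspace is invariant under $C(T)$. No gaps; the parenthetical alternative via Theorem~\ref{tri-characterization}(4) is fine but unnecessary.
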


\begin{proof}
Let $(\B, \leq)$ be a well-ordered basis with respect to which $T$ is triangular. Since $V \neq 0$, we have $\B \neq \emptyset$. Let $v\in \B$ be the least element with respect to $\leq$. Then $T(v) \in \langle v \rangle$, and hence $T(v) = av$ for some $a \in k$. Letting $W = \ker(T-aI)$, we see that $W \neq 0$, since $v \in W$. Now let $S \in C(T)$ be any element. Then $$(T-aI)S(W) = S(T-aI)(W) = 0,$$ and hence $S(W) \subseteq W$. It follows that $C(T)(W) \subseteq W$.
\end{proof}

\begin{lemma}\label{1-dim-invar-subspace}
Let $k$ be a field, $V$ a nonzero $k$-vector space, and $X \subseteq \End_k(V)$ a finite commutative collection of transformations. If each element of $X$ is triangularizable, then there exists a $1$-dimensional subspace $\, W \subseteq V$ such that $X(W) \subseteq W$.
\end{lemma}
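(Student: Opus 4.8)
The plan is to argue by induction on the cardinality $n = |X|$. For the base case $n = 0$, the conclusion is immediate: since $V \neq 0$, any nonzero vector spans a $1$-dimensional subspace, and there is no condition to check. (If one prefers to start at $n = 1$, that case is handled directly by Lemma~\ref{nontriv-invar-subspace}, which produces $a \in k$ with $\ker(T - aI) \neq 0$ for the single element $T$ of $X$; any line inside this kernel works.)

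For the inductive step, write $X = \{T_1, \dots, T_n\}$ with $n \geq 1$. First I would apply Lemma~\ref{nontriv-invar-subspace} to the triangularizable transformation $T_n$ on the nonzero space $V$, obtaining $a \in k$ such that $W' = \ker(T_n - aI)$ is nonzero and $C(T_n)(W') \subseteq W'$. Since $X$ is commutative, every $T_i$ lies in $C(T_n)$, so $W'$ is invariant under every element of $X$; in particular I may restrict each $T_i$ to $W'$. These restrictions $T_1|_{W'}, \dots, T_{n-1}|_{W'}$ commute with one another (being restrictions of commuting maps to a common invariant subspace), and each is triangularizable as a transformation of $W'$ by Corollary~\ref{restrict-cor}(1), because $W'$ is $T_i$-invariant and $T_i$ is triangular with respect to some well-ordered basis of $V$. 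Applying the inductive hypothesis to the finite commutative collection $\{T_1|_{W'}, \dots, T_{n-1}|_{W'}\}$ of triangularizable transformations of the nonzero space $W'$ yields a $1$-dimensional subspace $W \subseteq W'$ with $T_i(W) \subseteq W$ for $i = 1, \dots, n-1$. Finally, since $W \subseteq W' = \ker(T_n - aI)$, we get $T_n(W) \subseteq W$ as well, so $X(W) \subseteq W$, closing the induction.

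I do not expect a serious obstacle here; the argument is essentially the classical finite-dimensional simultaneous-triangularization induction. The only points that require care are the verification that passing to the common eigenspace $W'$ of $T_n$ preserves the two hypotheses on the remaining collection — commutativity, which is immediate, and triangularizability of the restrictions, which is exactly where Corollary~\ref{restrict-cor}(1) is needed — together with the bookkeeping that the restricted collection has strictly smaller size so that the inductive hypothesis applies.
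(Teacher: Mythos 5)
Your proof is correct and follows essentially the same route as the paper: the paper also applies Lemma~\ref{nontriv-invar-subspace} to one transformation, restricts the remaining ones to the resulting eigenspace via Corollary~\ref{restrict-cor}, and repeats ("continuing in this fashion"), which is just the iterative phrasing of your induction on $|X|$. The only cosmetic difference is that the paper ends with a nonzero subspace on which every $T_i$ acts as a scalar and then takes any line in it, whereas you fold that final step into the induction.
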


\begin{proof}
Write $X = \{T_1, \dots, T_n\}$. It suffices to construct a nonzero subspace of $V$ on which each $T_i$ acts as a scalar multiple of the identity, since any subspace $U$ of such a space would satisfy $X(U) \subseteq U$, and in particular, any $1$-dimensional subspace.

By Lemma~\ref{nontriv-invar-subspace}, there exists $a_1 \in k$ such that $W_1 = \ker(T_1-a_1I)$ satisfies $0 \neq W_1$ and $X(W_1) \subseteq W_1$. In particular, $T_1$ acts as a scalar multiple of the identity on $W_1$. By Corollary~\ref{restrict-cor}, the restriction $X|_{W_1}$ of $X$ to $W_1$ is a commutative collection of transformations in $\End_k(W_1)$, each triangularizable. Applying Lemma~\ref{nontriv-invar-subspace} again, we find $a_2 \in k$ such that $W_2 = \ker(T_2|_{W_1}-a_2I) \subseteq W_1$ satisfies $0 \neq W_2$ and $X|_{W_1}(W_2) \subseteq W_2$, and hence also $X(W_2) \subseteq W_2$. Now both $T_1$ and $T_2$ act as scalar multiples of the identity on $W_2$. Continuing in this fashion, the construction will yield a nonzero subspace $W_m$ of $V$ ($m \leq n)$ on which every $T_i$ acts this way.
\end{proof}

\begin{theorem} \label{sim-tri-thrm}
Let $k$ be a field, $V$ a $k$-vector space, and $X \subseteq \End_k(V)$ a finite commutative collection of transformations. If each element of $X$ is triangularizable, then there exists a well-ordered basis for $\, V$ with respect to which every element of $X$ is triangular.
\end{theorem}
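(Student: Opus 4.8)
The plan is to build, by transfinite recursion, a strictly increasing chain $0 = V_0 \subseteq V_1 \subseteq \cdots \subseteq V_\mu = V$ of subspaces of $V$, each invariant under every element of $X$, with every successor quotient $V_{\alpha+1}/V_\alpha$ one-dimensional and $V_\lambda = \bigcup_{\alpha<\lambda} V_\alpha$ at every limit ordinal $\lambda$; once this chain is in hand, a well-ordered basis simultaneously triangularizing all of $X$ can be read off just as in the converse direction of Lemma~\ref{invar-space-lemma}. The engine of the recursion is Lemma~\ref{1-dim-invar-subspace}. At a successor stage, assuming $V_\alpha \neq V$, the quotient $V/V_\alpha$ is a nonzero $k$-vector space on which the image collection $\overline{X} = \{\overline{T} \mid T \in X\}$ acts; this collection is finite and commutative, and each $\overline{T}$ is triangularizable by Corollary~\ref{restrict-cor}(2) (this uses that $V_\alpha$ is $X$-invariant, and that each $T \in X$ is triangularizable). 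Hence Lemma~\ref{1-dim-invar-subspace} furnishes a one-dimensional $\overline{X}$-invariant subspace of $V/V_\alpha$, and we let $V_{\alpha+1}$ be its preimage under the quotient map $V \to V/V_\alpha$; then $V_\alpha \subsetneq V_{\alpha+1}$, the quotient $V_{\alpha+1}/V_\alpha$ is one-dimensional, and $V_{\alpha+1}$ is $X$-invariant as the preimage of an $\overline{X}$-invariant subspace. At a limit stage I set $V_\lambda = \bigcup_{\alpha<\lambda} V_\alpha$, which is $X$-invariant as a union of $X$-invariant subspaces.

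Next I would show the recursion terminates. So long as $V_\alpha \neq V$, the successor step strictly enlarges the subspace, so $\alpha \mapsto V_\alpha$ is strictly increasing, hence injective; since all the $V_\alpha$ lie inside the fixed set $V$, the process cannot continue through all the ordinals, and therefore there is a least ordinal $\mu$ with $V_\mu = V$.

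Then I would extract the basis. For each $\gamma < \mu$ choose $w_\gamma \in V_{\gamma+1} \setminus V_\gamma$; since $\dim(V_{\gamma+1}/V_\gamma) = 1$, this gives $V_{\gamma+1} = V_\gamma + \langle w_\gamma \rangle$. Let $\B = \{w_\gamma \mid \gamma < \mu\}$, ordered by declaring $w_\gamma \leq w_\delta$ precisely when $\gamma \leq \delta$; this is a well-order because $\mu$ is an ordinal. A transfinite induction on $\alpha \leq \mu$ (with the zero, successor, and limit cases handled using $V_{\gamma+1} = V_\gamma + \langle w_\gamma\rangle$ and $V_\lambda = \bigcup_{\alpha<\lambda}V_\alpha$, respectively) shows $V_\alpha = \langle \{w_\gamma \mid \gamma < \alpha\}\rangle$ for all $\alpha \leq \mu$; taking $\alpha = \mu$ shows $\B$ spans $V$. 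For linear independence, a nontrivial dependence among $w_{\gamma_1}, \dots, w_{\gamma_n}$ with $\gamma_1 < \cdots < \gamma_n$ would force $w_{\gamma_n} \in \langle \{w_\gamma \mid \gamma < \gamma_n\}\rangle = V_{\gamma_n}$, contradicting the choice of $w_{\gamma_n}$. Finally, for any $T \in X$ and any $\gamma < \mu$, the $X$-invariance of $V_{\gamma+1}$ gives $T(w_\gamma) \in V_{\gamma+1} = \langle \{w_\delta \mid \delta \leq \gamma\}\rangle = \langle \{u \in \B \mid u \leq w_\gamma\}\rangle$, so every element of $X$ is triangular with respect to $(\B, \leq)$, as required.

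Most of the real difficulty has been front-loaded into the already-established Lemma~\ref{1-dim-invar-subspace} and Corollary~\ref{restrict-cor}, so the remaining obstacle is largely one of careful bookkeeping: at each successor stage one must verify that the hypotheses of Lemma~\ref{1-dim-invar-subspace} genuinely persist after passing to the quotient $V/V_\alpha$ — namely that finiteness and commutativity of the collection survive, and, crucially, that triangularizability of each member survives, which is exactly what Corollary~\ref{restrict-cor}(2) supplies — and that the recursion, its termination, and the final basis check are pushed through cleanly across limit ordinals.
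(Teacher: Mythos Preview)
Your proposal is correct and follows essentially the same approach as the paper: a transfinite recursion that at each successor stage passes to the quotient $V/V_\alpha$, invokes Corollary~\ref{restrict-cor}(2) to preserve triangularizability, applies Lemma~\ref{1-dim-invar-subspace} to extract a one-dimensional invariant subspace, and then reads off a well-ordered basis from the resulting maximal chain of $X$-invariant subspaces. The only differences are cosmetic (you index basis vectors by predecessors $\gamma$ rather than successors $\alpha$, and you spell out the termination and linear-independence checks in slightly more detail).
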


\begin{proof}
We begin by constructing recursively for each ordinal $\alpha$ a subspace $V_\alpha \subseteq V$ that is invariant under $X$, and for each successor ordinal $\alpha$ a vector $v_\alpha \in V$. Set $V_0=0$. Now let $\alpha$ be an ordinal and assume that $V_\gamma$ has been defined for every $\gamma < \alpha$. If $\alpha$ is a limit ordinal, then let $V_\alpha = \bigcup_{\gamma < \alpha} V_\gamma$. Since each $V_\gamma$ is assumed to be invariant under $X$, their union $V_\alpha$ will also be invariant under $X$. Next, if $\alpha$ is a successor ordinal, then let $\beta$ be its predecessor. By Corollary~\ref{restrict-cor}, the transformation on $V/V_\beta$ induced by each element of $X$ is triangular with respect to some basis for $V/V_\beta$. Thus, by Lemma~\ref{1-dim-invar-subspace}, there is a $1$-dimensional subspace $W/V_\beta$ of $V/V_\beta$ invariant under each transformation on $V/V_\beta$ induced by an element of $X$ (assuming that $V\neq V_\beta$). Let $v_\alpha \in V$ be such that $\{v_\alpha+V_\beta\}$ is a basis for $W/V_\beta$, and define $V_\alpha = \langle V_\beta \cup \{v_\alpha\}\rangle$. Then $V_\alpha$ must be invariant under $X$, because of the invariance of $V_\beta$ and $W$. We proceed in this fashion until $V = \bigcup_{\alpha \in \Lambda} V_\alpha$ for some ordinal $\Lambda$.

Now let $$\Gamma = \{\alpha \in \Lambda \mid \alpha \text{ is a successor ordinal}\},$$ and let $\B = \{v_\alpha \mid \alpha \in \Gamma\}$. Since we introduced new vectors only at successor steps in our construction, $$V = \bigcup_{\alpha \in \Gamma} V_\alpha =  \bigcup_{\alpha \in \Gamma} \langle \{v_\gamma \mid \gamma \leq \alpha, \gamma \in \Gamma\}\rangle,$$ and hence $V = \langle \B \rangle$. Since $V_\alpha/V_\beta = \langle v_\alpha +V_\beta \rangle$ is $1$-dimensional for all $\alpha \in \Gamma$ with predecessor $\beta$, we conclude that $\B$ is a basis for $V$. Also, since $V_\alpha = \langle \{v_\gamma \mid \gamma \leq \alpha, \gamma \in \Gamma\}\rangle$ is invariant under $X$ for all $\alpha \in \Gamma$, it follows that $X(v_\alpha) \in \langle \{v_\gamma \mid \gamma \leq \alpha, \gamma \in \Gamma\} \rangle$ for all $\alpha \in \Gamma$. Thus, every element of $X$ is triangular with respect to $\B$, a basis for $V$ indexed by the well-ordered set $\Gamma$.
\end{proof}

In~\cite[Example 4.17]{IMR} there is a construction of a countably infinite commutative set $E$ of transformations of a countably infinite-dimensional vector space $V$, over an arbitrary field, such that each transformation in $E$ is diagonalizable (an idempotent, actually), but such that no $1$-dimensional subspace of $V$ is invariant under $E$. Thus, there is no well-ordered basis for $V$ with respect to which every element of $E$ is triangular, since the least element of such a basis would be an eigenvector of every element of $E$. Hence, Theorem~\ref{sim-tri-thrm} cannot be extended to arbitrary infinite commutative collections of triangularizable transformations.

\section{Inverses}

In the following proposition we generalize the facts that an upper-triangular matrix is invertible if and only if it has only nonzero diagonal entries, and that the inverse of an upper-triangular matrix is also upper-triangular. These are simple observations, but they further reinforce the idea that our notion of ``triangularizable" preserves intuition from finite-dimensional linear algebra.

\begin{proposition} \label{inverse-prop}
Let $k$ be a field, $V$ a $k$-vector space, and $T \in \End_k(V)$ triangular with respect to some well-ordered basis $\, (\B, \leq)$ for $\, V$. Also for each $v \in \B$ let  $\pi_v \in \End_k(V)$ be the projection onto $\, \langle v \rangle$ with kernel $\, \langle \B \setminus \{v\} \rangle$. Then the following are equivalent.
\begin{enumerate}
\item[$(1)$] $T$ is invertible.
\item[$(2)$] The restriction of $T$ to any finite-dimensional $T$-invariant subspace of $\, V$ is invertible.
\item[$(3)$] $T$ is injective.
\item[$(4)$] $T(\langle u \in \B \mid u \leq v \rangle) = \langle u \in \B \mid u \leq v \rangle$ for all $v \in \B$.
\item[$(5)$] $\pi_vT\pi_v \neq 0$ for all $v \in \B$.  
\end{enumerate}
Moreover, if $T$ is invertible, then its inverse is triangular with respect to $\, (\B, \leq)$.
\end{proposition}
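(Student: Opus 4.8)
The plan is to reduce everything to finite-dimensional linear algebra, then run the cycle $(1)\Rightarrow(2)\Rightarrow(3)\Rightarrow(4)\Rightarrow(5)\Rightarrow(1)$, and finally read off the claim about $T^{-1}$ from condition $(4)$. For the set-up, for each $v\in\B$ write $V_v=\langle\{u\in\B\mid u\leq v\}\rangle$ and $V_{<v}=\langle\{u\in\B\mid u<v\}\rangle$; both are $T$-invariant since $T$ is triangular with respect to $(\B,\leq)$. The essential tool is the partial order $\preceq$ built in the proof of Proposition~\ref{upper-tri-prop}(2): the set $B_v=\{u\in\B\mid u\preceq v\}$ is finite, contains $v$, spans a $T$-invariant subspace (by transitivity of $\preceq$ together with the fact that $w$ occurs in $T(u)$ only if $w\preceq u$), and satisfies $B_v\subseteq V_v$ because $\leq$ extends $\preceq$. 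Consequently $T|_{\langle B_v\rangle}$ is triangular with respect to the finite totally ordered set $(B_v,\leq)$, i.e. it is a genuine finite upper-triangular matrix, whose diagonal entry at $v$ is the scalar $c$ with $\pi_v T\pi_v(v)=cv$; in particular $\pi_v T\pi_v\neq 0$ iff that diagonal entry is nonzero. I will invoke the classical facts that such a matrix is invertible iff it has no zero diagonal entry iff it is injective, and that its inverse is again upper-triangular. Finally, for any finite $X\subseteq\B$ the set $B_X=\bigcup_{x\in X}B_x$ is finite and $\langle B_X\rangle=\sum_{x\in X}\langle B_x\rangle$ is a finite-dimensional $T$-invariant subspace containing $\langle X\rangle$.

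Now the implications. $(1)\Rightarrow(2)$: the restriction of an injective map to a finite-dimensional $T$-invariant subspace is injective, hence bijective. $(2)\Rightarrow(3)$: if $0\neq v\in\kernel T$, then $\langle v\rangle$ is a finite-dimensional $T$-invariant subspace on which $T$ acts as $0$, contradicting $(2)$. $(3)\Rightarrow(4)$: always $T(V_v)\subseteq V_v$; conversely, for each basis vector $u\leq v$ the map $T|_{\langle B_u\rangle}$ is injective (a restriction of an injective map), hence bijective on the finite-dimensional space $\langle B_u\rangle$, so $u\in T(\langle B_u\rangle)\subseteq T(V_v)$ since $\langle B_u\rangle\subseteq V_u\subseteq V_v$; thus $V_v\subseteq T(V_v)$. $(4)\Rightarrow(5)$: fix $v$; as $V_{<v}$ is $T$-invariant, $T$ induces $\overline T$ on $V/V_{<v}$, acting on the one-dimensional subspace $V_v/V_{<v}$ as multiplication by the diagonal scalar $c$ above; applying the quotient map to $T(V_v)=V_v$ gives $\overline T(V_v/V_{<v})=V_v/V_{<v}\neq 0$, so $c\neq 0$, i.e. $\pi_v T\pi_v\neq 0$. $(5)\Rightarrow(1)$: by $(5)$, every $T|_{\langle B_X\rangle}$ is a finite upper-triangular matrix with nonzero diagonal, hence invertible; choosing $X$ to contain the support of a putative nonzero kernel vector yields injectivity of $T$, and choosing $X$ to contain the support of an arbitrary target vector yields surjectivity, so $T$ is bijective and therefore invertible in $\End_k(V)$.

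For the last assertion: if $T$ is invertible then $(4)$ holds, so $v\in V_v=T(V_v)$ for every $v\in\B$, whence $T^{-1}(v)\in V_v=\langle\{u\in\B\mid u\leq v\}\rangle$; that is, $T^{-1}$ is triangular with respect to $(\B,\leq)$.

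The main obstacle is exactly the one flagged above: the obvious ``initial segment'' subspaces $V_v$ are $T$-invariant but are typically infinite-dimensional (whenever $v$ sits at a limit position in the well-order), so the classical finite-dimensional theory cannot be applied to them directly. Proposition~\ref{upper-tri-prop}(2) is what rescues the argument, supplying around each basis vector a truly finite $T$-invariant set $B_v$ whose associated finite upper-triangular matrix still detects the single diagonal entry $\pi_v T\pi_v$, with finite unions of these handling arbitrary finite-dimensional subspaces. The only point requiring a little care is identifying the diagonal entry of $T|_{\langle B_v\rangle}$ at $v$ with the scalar determined by $\pi_v T\pi_v$ and with the induced map on $V_v/V_{<v}$; this is routine but should be made explicit. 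Everything else is bookkeeping with the standard facts about finite triangular matrices.
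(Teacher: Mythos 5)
Your proof is correct, but it takes a genuinely different route from the paper's in several of the implications. The paper's proof runs the cycle $(1)\Rightarrow(2)\Rightarrow(3)\Rightarrow(4)\Rightarrow(5)\Rightarrow(3)\Rightarrow(1)$ and works directly with the infinite-dimensional initial segments $U_v=\langle u\in\B\mid u\leq v\rangle$: for $(2)\Rightarrow(3)$ it invokes Proposition~\ref{upper-tri-prop}(1) to place a kernel vector in a finite-dimensional invariant subspace (your observation that $\langle v\rangle$ itself is $T$-invariant when $T(v)=0$ is simpler), for $(3)\Rightarrow(4)$ it uses the well-order directly via a least-counterexample argument (a least $w$ with $U_w\not\subseteq T(U_w)$), and for $(4)\Rightarrow(5)$ and $(5)\Rightarrow(3)$ it does explicit coefficient computations with $\pi_v$ applied to a finite linear combination whose largest basis vector is $v$. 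You instead systematically localize to the finite $T$-invariant subspaces $\langle B_v\rangle$ furnished by the partial order $\preceq$ of Proposition~\ref{upper-tri-prop}(2) (the same device the paper uses in Theorem~\ref{tri-characterization}, $(5)\Rightarrow(2)$, to see that $\langle\{u\mid u\preceq v\}\rangle$ is $T$-invariant), reducing every implication to facts about honest finite upper-triangular matrices and running the straight cycle $(5)\Rightarrow(1)$ without passing back through $(3)$; your $(4)\Rightarrow(5)$ via the induced map on $V_v/V_{<v}$ is also a cleaner conceptual substitute for the paper's computation. The trade-off is that your argument leans on the finiteness statement of Proposition~\ref{upper-tri-prop}(2) as an essential input (so the well-ordering of $\B$ is still used, just packaged there), whereas the paper's proof uses only triangularity and the well-order of $\B$ in situ, at the cost of more hands-on transfinite and coefficient bookkeeping; the one point you rightly flag as needing to be made explicit --- that $T|_{\langle B_v\rangle}$ is upper-triangular with respect to $(B_v,\leq)$ with diagonal entry at $v$ given by $\pi_vT\pi_v$ --- does go through, since $T(u)\in\langle\{w\mid w\preceq u\}\rangle\subseteq\langle B_v\rangle$ and $\leq$ extends $\preceq$.
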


\begin{proof}
We shall show that $(1) \Rightarrow (2) \Rightarrow (3) \Rightarrow (4) \Rightarrow (5) \Rightarrow (3) \Rightarrow (1)$. For the rest of the proof let $U_v = \langle u \in \B \mid u \leq v \rangle$ for each $v \in \B$.

$(1) \Rightarrow (2)$ Let $W$ be a finite-dimensional $T$-invariant subspace of $V$. If $T$ is invertible, then $\ker(T) = 0$, and hence also $\ker(T|_W)=0$. Standard finite-dimensional linear algebra then gives that $T|_W$ is invertible.

$(2) \Rightarrow (3)$ Suppose that $T(v) = 0$ for some $v \in V$. Then by Proposition~\ref{upper-tri-prop}, $v$ is an element of some finite-dimensional $T$-invariant subspace $W$ of $V$. Now, by (2) $T|_W$ is invertible, and therefore $T(v)=0=T|_W(v)$ implies that $v=0$. Thus $T$ is injective.

$(3) \Rightarrow (4)$ Suppose that $T$ is injective. Since $T$ is triangular with respect to $\B$, we have $T(U_v) \subseteq \bigcup_{u\leq v} U_u = U_v$ for all $v \in \B$.

Now suppose that $U_w \not\subseteq T(U_w)$ for some $w \in \B$. Since $\B$ is well-ordered, we may assume that $w$ is the least element of $\B$ with this property. Thus for all $v\in \B$ such that $v < w$, we have $v \in U_v = T(U_v) \subseteq T(U_w)$, and therefore $w \not\in T(U_w)$. Since $T$ is triangular with respect to $\B$ this implies that $T(w) \in \langle u \in \B \mid u < w \rangle$. Thus either $T(w)=0$ or $T(w) \in U_v$ for some $v \in \B$ such that $v < w$. But, by hypothesis, $U_v = T(U_v)$ for any $v < w$, and hence either $T(w)=0$ or $T(u)=T(w)$ for some $u \in U_v$, both of which would contradict $T$ being injective. Therefore $T(U_v) = U_v$ for all $v \in \B$.

$(4) \Rightarrow (5)$ Suppose that $T(U_v) = U_v$ for all $v \in \B$. Then, given any $v \in \B$, there exists $w \in U_v$ such that $T(w)=v$. Write $w = \sum_{i=1}^n a_i u_i$ for some $a_i \in k$ and $u_i\in \B$, such that $u_1 < u_2 < \dots < u_n = v$. Since $T$ is triangular with respect to $\B$, we have $T(\sum_{i=1}^{n-1} a_i u_i) \in U_{u_{n-1}}$ (if $n > 1$). Hence $$v=\pi_{v}T(w) = \pi_{v}T\bigg(\sum_{i=1}^{n-1} a_i u_i \bigg) + a_n\pi_{v}T(u_n) =  a_n\pi_{v}T(v),$$ and therefore $\pi_vT\pi_v \neq 0$.

$(5) \Rightarrow (3)$ Let $w \in V \setminus \{0\}$, and suppose that $T(w)=0$. Write $w = \sum_{i=1}^n a_i v_i$ for some $a_i \in k\setminus \{0\}$ and $v_i\in \B$, such that $v_1 < v_2 < \dots < v_n$. Since $T$ is triangular with respect to $\B$, we have $T(\sum_{i=1}^{n-1} a_i v_i) \in U_{v_{n-1}}$ (if $n > 1$). Hence $$0=\pi_{v_n}T(w) = \pi_{v_n}T\bigg(\sum_{i=1}^{n-1} a_i v_i \bigg) + a_n\pi_{v_n}T(v_n) =  a_n\pi_{v_n}T(v_n),$$ which implies that $\pi_vT\pi_v = 0$, since $a_n \neq 0$. Thus if $\pi_vT\pi_v \neq 0$ for all $v \in \B$, then $T$ must be injective.

$(3) \Rightarrow (1)$ Supposing that $T$ is injective, we also have $T(U_v) = U_v$ for all $v \in \B$, by $(3) \Rightarrow (4)$. Thus, $v \in T(U_v)$ for all $v \in \B$, and therefore $\B \subseteq T(V)$, which implies that $T$ is surjective. Therefore $T$ is a bijection. The desired conclusion now follows from the easy fact that the inverse of any $k$-linear bijection from $V$ to $V$ is necessarily $k$-linear.

For the final claim, suppose that $T$ is invertible, with inverse $T^{-1} \in \End_k(V)$. Then we have $T(U_v) = U_v$ for all $v\in \B$, by the equivalence of $(1)$ and $(3)$. Therefore $T^{-1}(U_v) = U_v$ for all $v\in \B$, and in particular $T^{-1}(v) \in U_v$. Hence $T^{-1}$ is also triangular with respect to $\B$.
\end{proof}

We note that a triangularizable transformation can be surjective without being invertible, in contrast to the situation with injectivity discussed above. For example, let $k$ be a field, $V$ a $k$-vector space with basis $\B = \{v_i \mid i \in \N\}$, and $T \in \End_k(V)$ such that $T(v_0)=0$ and $T(v_i)=v_{i-1}$ for all $i \geq 1$. Then clearly $T$ is (strictly) triangular with respect to $\B$ and surjective, but it is not injective.

The next two examples show that chain-triangularizable transformations are not nearly as well-behaved with respect to inversion as triangularizable ones.

\begin{example}
Let $k$ be a field and $V$ a $k$-vector space with basis $\{v_i \mid i \in \Z\}$. Define $T \in \End_k(V)$ by $T(v_i) = v_{i-1}$ for each $i \in \Z$, and extend linearly to all of $V$. As seen in Example~\ref{weak-tri-eg}, $T$ is chain-triangularizable but not triangularizable. Clearly $T$ is invertible, with inverse $T^{-1}$ defined by $T^{-1}(v_i) = v_{i+1}$ for all $i \in \Z$. 

Letting $V_i = \langle \{v_j \mid j \leq i\} \rangle$ for each $i \in \Z$, as we showed in Example~\ref{weak-tri-eg}, $$\cdots \subseteq V_{-1} \subseteq V_{0} \subseteq V_{1} \subseteq \cdots$$ is a maximal chain of $T$-invariant subspaces of $V$, each $T$-invariant. However none of the $V_i$ is $T^{-1}$-invariant, since $T^{-1}(v_i) = v_{i+1} \notin V_i$ for each $i \in \Z$. Thus, a ``triangularizing chain" for $T$ need not be one for $T^{-1}$. Or, to put it another way, $T$ is triangular with respect to the totally ordered basis $\{v_i \mid i \in \Z\}$, but $T^{-1}$ is not. \hfill $\Box$
\end{example}

\begin{example} \label{lower-eg}
Let $k$ be a field and $V$ a $k$-vector space with basis $\{v_i \mid i \in \N\}$. Define $T \in \End_k(V)$ by $T(v_i) = v_{i+1}$ for each $i \in \N$, and extend linearly to all of $V$. Also for each $i \in \N$ let $V_i = \langle \{v_j \mid j \geq i\} \rangle$. Then $$V_{0} \supseteq V_{1} \supseteq V_{2} \supseteq \cdots$$ is a maximal chain of subspaces of $V$, each $T$-invariant (by the same argument as in Example~\ref{weak-tri-eg}). Thus $T$ is chain-triangularizable. On the other hand, $T$ is not triangularizable, by Proposition~\ref{upper-tri-prop}, since the only finite-dimensional $T$-invariant subspace of $V$ is the zero space.

Now, given the previous observation, it is vacuously true that the restriction of $T$ to any finite-dimensional $T$-invariant subspace of $V$ is invertible. But unlike triangularizable transformations with this property, $T$ itself is certainly not invertible, since $v_0 \notin T(V)$, and hence $T$ is not surjective.  \hfill $\Box$
\end{example}

\section{Topology} \label{top-sect}

We begin this section by recalling the standard topology on the ring $\End_k(V)$, which will help us with subsequent results.

Let $X$ and $Y$ be sets, and let $Y^X$ denote the set of all functions $X \to Y$.
The \emph{function} (or \emph{finite}) \emph{topology} on $Y^X$ has a base of open sets of the following form: $$\{f \in Y^X \mid f(x_1) = y_1, \dots, f(x_n) = y_n\} \ (x_1, \dots, x_n \in X, y_1, \dots, y_n \in Y).$$ It is straightforward to see that this coincides with the product topology on $Y^X = \prod_X Y$, where each component set $Y$ is given the discrete topology.  As a product of discrete spaces, this space is Hausdorff.

Now let $V$ be a vector space over a field $k$. Then $\End_k(V) \subseteq V^V$ inherits a topology from the function topology on $V^V$, which we shall also call the \emph{function topology}. Under this topology $\End_k(V)$ is a topological ring (see, e.g.,~\cite[Theorem~29.1]{Warner}), i.e., a ring $R$ equipped with a topology that makes $+:R\times R \to R$, $-:R \to R$, and $\cdot :R\times R \to R$ continuous. Alternatively, we may describe the function topology on $\End_k(V)$ as the topology having a base of open sets of the following form: $$\{S \in \End_k(V) \mid S|_W=T|_W\} \ (T \in \End_k(V), W \subseteq V \text{ a finite-dimensional subspace}).$$ Observe that when $V$ is finite-dimensional, $\End_k(V)$ is discrete in this topology.

Next, we describe the closure of the set of triangularizable transformations in $\End_k(V)$ with respect to the above topology. This result generalizes (as did Theorem~\ref{tri-characterization}) Shur's theorem, which says that every (finite) matrix over an algebraically closed field is triangularizable.

\begin{theorem} \label{closure-thrm}
Let $k$ be a field and $V$ a $k$-vector space. Define $\T \subseteq \End_k(V)$ to be the subset of all triangularizable transformations, and let $\overline{\T} \subseteq \End_k(V)$ be the closure of $\T$ in the function topology. 

Then for all $T \in \End_k(V)$, we have $T \in \overline{\T}$ if and only if the restriction of $T$ to any finite-dimensional $T$-invariant subspace of $\, V$ is triangularizable. In particular, if $k$ is algebraically closed, then $\overline{\T} = \End_k(V)$.
\end{theorem}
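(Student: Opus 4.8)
The plan is to work from the basis for the function topology: a base of open sets around $T$ consists of the sets $\{S\in\End_k(V)\mid S|_W=T|_W\}$ with $W$ finite-dimensional, so $T\in\overline{\T}$ exactly when for every finite-dimensional subspace $W\subseteq V$ there is a triangularizable $S\in\End_k(V)$ with $S|_W=T|_W$. The forward implication is the easy half: if $W$ is a finite-dimensional $T$-invariant subspace and $S$ is triangularizable with $S|_W=T|_W$, then $W$ is $S$-invariant, so $T|_W=S|_W$ is triangularizable by Corollary~\ref{restrict-cor}(1). The last sentence of the statement is then immediate, because over an algebraically closed field every finite-dimensional transformation is triangularizable, so the stated condition holds for every $T$ and $\overline{\T}=\End_k(V)$.

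For the converse, assume $T$ restricted to every finite-dimensional $T$-invariant subspace is triangularizable, fix a finite-dimensional $W$, and regard $V$ as a $k[x]$-module via $x\mapsto T$. Let $N$ be the union of all finite-dimensional $T$-invariant subspaces of $V$; this is precisely the torsion submodule of $V$, and in particular a $T$-invariant subspace. I would first show that $T|_N$ is triangularizable: any finite-dimensional subspace of $N$ lies in a finite sum of cyclic torsion submodules, hence in a finite-dimensional $T$-invariant subspace, on which $T$ is triangularizable by hypothesis and so is annihilated by a polynomial splitting over $k$; thus $T|_N$ satisfies Theorem~\ref{tri-characterization}(2). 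Next, pass to $\overline{V}=V/N$, a torsion-free $k[x]$-module, with the induced transformation $\overline{T}$; since a torsion-free module has no nonzero finite-dimensional submodule, $\overline{V}$ contains no nonzero finite-dimensional $\overline{T}$-invariant subspace. In particular, writing $q\colon V\to\overline{V}$ for the projection, the finite-dimensional subspace $\overline{W}=q(W)$ contains no nonzero $\overline{T}$-invariant subspace.

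The heart of the matter is then a finite-dimensional lemma: a linear map $\phi$ from a finite-dimensional subspace $\overline{W}$ of a vector space into the ambient space, such that $\overline{W}$ contains no nonzero $\phi$-invariant subspace, extends to a nilpotent transformation of the ambient space. To prove this, note first that $\phi|_{\overline{W}}$ is injective (its kernel is $\phi$-invariant), and choose a complement $Z$ of $\overline{W}$ inside $\overline{W}+\phi(\overline{W})$ for which the compression $\pi_{\overline{W}}\circ\phi$ (with $\pi_{\overline{W}}$ the projection along $Z$) is nilpotent; then let the extension agree with $\phi$ on $\overline{W}$ and vanish on a complement of $\overline{W}+\phi(\overline{W})$, and check that $\phi(\overline{W})$ is invariant under it and that its restriction there is conjugate, via $\phi$, to $\pi_{\overline{W}}\circ\phi$, so that nilpotency of the compression forces nilpotency of the extension. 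Existence of the good $Z$ I would get by induction on $\dim\overline{W}$: as $\overline{W}$ is not $\phi$-invariant there is $w_0\in\overline{W}$ with $\phi(w_0)\notin\overline{W}$; apply the inductive hypothesis to a hyperplane $\overline{W}_1$ of $\overline{W}$ with $w_0\notin\overline{W}_1$, and enlarge the resulting complement by $\langle\phi(w_0)\rangle$ so that the new compression carries $\overline{W}$ into $\overline{W}_1$ while restricting on $\overline{W}_1$ to the nilpotent map already built. Applying the lemma to $\phi=\overline{T}|_{\overline{W}}$ yields a nilpotent $\overline{S}\in\End_k(\overline{V})$ extending it; finally I lift to $S\in\End_k(V)$ by setting $S=T|_N$ on $N$ and, on a complement of $N$ chosen to contain a complement $C$ of $W\cap N$ in $W$, taking $S$ to be any preimage of $\overline{S}$ sending each $c\in C$ to $Tc$. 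Then $S|_W=T|_W$; moreover $N$ is $S$-invariant with $S|_N=T|_N$ triangularizable, and $S$ induces the nilpotent $\overline{S}$ on $V/N$, so for any finite-dimensional $U\subseteq V$ a fixed power $S^d$ sends $U$ into $N$, after which triangularizability of $T|_N$ supplies a split polynomial annihilating $S^d(U)$; multiplying by $x^d$ gives a split annihilator of $U$, so $S$ satisfies Theorem~\ref{tri-characterization}(2) and is triangularizable.

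I expect the main obstacle to be the inductive construction in the finite-dimensional lemma — producing a complement $Z$ that makes the compression nilpotent. The bookkeeping of how the complement enlarges at each step, and the verification that the enlarged compression still maps $\overline{W}$ into the hyperplane $\overline{W}_1$, must be handled with care, in particular in the case where $\phi(w_0)$ already lies in $\overline{W}_1+\phi(\overline{W}_1)$. A secondary point deserving attention is the closing step that an ``$S$-invariant-triangularizable-by-nilpotent'' transformation is triangularizable, which genuinely uses the annihilating-polynomial characterization of Theorem~\ref{tri-characterization} rather than any direct manipulation of bases.
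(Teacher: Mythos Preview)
Your overall strategy is sound and genuinely different from the paper's. The paper never passes to the torsion quotient; instead it applies the structure theorem for finitely generated modules over a PID directly to $M=k[x]W$, obtaining $M=M_1\oplus M_2$ with $M_2$ torsion and $M_1$ free with an explicit ``shift'' basis $\{T^i(v_j)\}$. The approximating $S$ is then written down by hand: it agrees with $T$ on a large enough finite truncation of that basis, kills the next layer, agrees with $T$ on a finite-dimensional $T$-invariant enlargement of the torsion part, and vanishes elsewhere. Your route---first showing $T|_N$ is triangularizable, then producing a nilpotent extension on the torsion-free quotient, and finally gluing via condition~(2) of Theorem~\ref{tri-characterization}---is conceptually cleaner in that it separates the two obstructions, at the cost of the extra lifting step. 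Both approaches ultimately exploit the same dichotomy between torsion and free behaviour.

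The genuine gap is exactly where you place it: your inductive construction of the complement $Z$ does not go through as written. In the flagged case $\phi(w_0)\in\overline W_1+\phi(\overline W_1)$, the set $Z_1+\langle\phi(w_0)\rangle$ need not have zero intersection with $\overline W$, so it need not be a complement, and the claim that the new compression carries $\overline W$ into $\overline W_1$ breaks down. The lemma itself is true, but a different argument is needed. One clean fix: form the filtration $\overline W=\overline W_0\supseteq\overline W_1\supseteq\cdots\supseteq\overline W_m=0$ with $\overline W_i=\{w:\phi(w),\dots,\phi^i(w)\in\overline W\}$; then $\phi(\overline W_i)\subseteq\overline W_{i-1}$, and for $c\in\overline W_{i}\setminus\overline W_{i+1}$ one checks $\phi(c)\notin\overline W_{i}$, so $\phi(C_{i+1})\cap\overline W_i=0$ for any complement $C_{i+1}$ of $\overline W_{i+1}$ in $\overline W_i$. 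Choosing the $C_i$ from the bottom up so that $\phi(C_{i+1})\subseteq C_i$, and setting the extension to $0$ on $C_1$, gives a compression that strictly lowers the filtration index and is therefore nilpotent. Alternatively---and this is closer to the paper---since in your application $\overline V$ is torsion-free, $k[x]\overline W$ is free, and a shift basis for it gives the nilpotent extension immediately without any induction.
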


\begin{proof}
Suppose that $T \in \overline{\T}$, and let $W \subseteq V$ be a finite-dimensional $T$-invariant subspace. Since $T \in \overline{\T}$, there exists $S \in \T$ that agrees with $T$ on $W$. Since $S$ is triangularizable, by Theorem~\ref{tri-characterization}, there is a polynomial $p(x) \in k[x]\setminus k$ that factors into linear terms, such that $p(S)$ annihilates $W$. It follows that $p(T|_W)$ annihilates $W$ as well, and hence, by Theorem~\ref{cl-tri-thrm} (or Theorem~\ref{tri-characterization}), $T|_W$ is triangularizable. 

Conversely, suppose that the restriction of $T$ to any finite-dimensional $T$-invariant subspace of $V$ is triangularizable, and let $\U$ be an open neighborhood of $T$. Passing to a subset, if necessary, we may assume that $$\U = \{H \in \End_k(V) \mid H|_W = T|_W\}$$ for some finite-dimensional subspace $W$ of $V$. We shall show that $\U$ contains a triangularizable transformation, from which the desired conclusion follows.

We may view $V$ as a $k[x]$-module, where $x$ acts on $V$ as $T$. Then $M=k[x]W$ is a finitely-generated $k[x]$-submodule of $V$. Since $k[x]$ is a principal ideal domain, $M \cong k[x]^r \oplus N$, where $r \in \N$ and $N$ is a torsion $k[x]$-module. (See, e.g.,~\cite[Section 12.1, Theorem 5]{DF}.) Hence there exist subspaces $M_1, M_2 \subseteq V$ such that $M= M_1 \oplus M_2$, $M_1$ has a basis of the form $$\{T^i(v_j) \mid 1 \leq j \leq r, \, i \in \N\}$$ (where $T^{i_1}(v_{j_1}) \neq T^{i_2}(v_{j_2})$ whenever $(i_1,j_1) \neq (i_2,j_2)$), and for every $w \in M_2$ there is some $p(x) \in k[x]\setminus k$ such that $p(T)(w)=0$. In particular, every $w \in M_2$ is contained in a finite-dimensional $T$-invariant subspace of $V$. (Specifically, the space spanned by $w, T(w), T^2(w), \dots, T^{n-1}(w)$, where $n$ is the degree of a polynomial $p(x) \in k[x]\setminus k$ such that $p(T)(w)=0$, is invariant under $T$.)

Since $W$ is finite-dimensional, we can find finite-dimensional subspaces $W_1 \subseteq M_1$ and $W_2 \subseteq  M_2$ such that $W \subseteq W_1 \oplus W_2$. Since, by the above, $W_2$ is contained in a finite-dimensional $T$-invariant subspace of $V$, upon enlarging $W_2$, if necessary, we may assume that it is $T$-invariant. Hence, by hypothesis, $T|_{W_2}$ is triangularizable.

Upon enlarging $W_1$, if necessary, we may assume that $W_1$ has a basis of the form $$\{T^i(v_j) \mid 1\leq j \leq r, \, 0\leq i \leq n_j\},$$ for some $n_1, \dots, n_r \in \N$. Let $$W_1^+ = \langle \{T^i(v_j) \mid 1\leq j \leq r, \, 0\leq i \leq n_j+1\}\rangle.$$ Define $S \in \End_k(V)$ on $\{T^i(v_j) \mid 1\leq j \leq r, \, 0\leq i \leq n_j+1\}$ by
$$S (T^i(v_j)) = \left\{ \begin{array}{ll}
T^{i+1}(v_j) & \text{if } \, i \leq n_j\\
0 & \text{if } \, i=n_j+1
\end{array}\right.,$$
and extend $S$ to a transformation on $V$ by letting it act as $T$ on $W_2$ and as the zero transformation on a complement of $W_1^+ \oplus W_2$. Then $S$ agrees with $T$ on $W$, and hence $S \in \U$. Moreover, $S$ is triangularizable, since $T|_{W_2}$ is triangularizable, while $S|_{W_1^+}$ is nilpotent, and hence triangularizable, by Theorem~\ref{cl-tri-thrm} (or Lemma~\ref{upper-tri-lemma} or Theorem~\ref{tri-characterization}). 

For the final claim, suppose that $k$ is algebraically closed, and let $T \in \End_k(V)$. Suppose also that $W \subseteq V$ is a finite-dimensional $T$-invariant subspace. Then, by the Cayley-Hamilton theorem, $T$ satisfies a polynomial on $W$. Since $k$ is algebraically closed, this polynomial can be factored into linear terms in $k[x]$. Hence $T|_W$ is triangularizable, by Theorem~\ref{cl-tri-thrm}. It follows that $T \in \overline{\T}$, and hence $\overline{\T} = \End_k(V)$.
\end{proof}

Using the function topology and Theorem~\ref{tri-characterization} we can generalize the standard fact that a matrix is nilpotent if and only if it is similar to a strictly upper-triangular matrix if and only if $0$ is its only eigenvalue (over the algebraic closure of the base field).

\begin{proposition} \label{top-nilpt-prop}
Let $k$ be a field and $\, V$ a nonzero $k$-vector space. The following are equivalent for any $T \in \End_k(V)$.
\begin{enumerate}
\item[$(1)$] $T$ is \emph{topologically nilpotent} with respect to the function topology on $\, \End_k(V)$. That is, the sequence $\, (T^i)_{i=1}^\infty$ converges to $\, 0$.
\item[$(2)$] $V = \bigcup_{i=1}^\infty \ker (T^i)$.
\item[$(3)$] $T$ is strictly triangularizable.
\item[$(4)$] $T$ is triangularizable, and if $\, (\B, \leq)$ is a well-ordered basis for $\, V$ with respect to which $T$ is triangular, then $T$ is strictly triangular with respect to $\, (\B, \leq)$.
\item[$(5)$] $T$ is triangularizable, and $\, \ker(T-aI) \neq 0$ if and only if $a=0$, for all $a \in k$.
\end{enumerate}
\end{proposition}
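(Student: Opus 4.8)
The plan is to prove the equivalences $(1) \Leftrightarrow (2) \Rightarrow (3) \Rightarrow (4) \Rightarrow (5) \Rightarrow (2)$, using the machinery already developed. The first equivalence $(1) \Leftrightarrow (2)$ is essentially an unwinding of the definition of the function topology: $(T^i)_{i=1}^\infty \to 0$ means that for every finite-dimensional subspace $W \subseteq V$ there is an $N$ with $T^i|_W = 0$ for all $i \geq N$; choosing $W = \langle v \rangle$ for each $v \in V$ shows this is equivalent to every $v$ lying in some $\ker(T^i)$, i.e.\ $V = \bigcup_{i=1}^\infty \ker(T^i)$. (Conversely, if $W$ is finite-dimensional and spanned by $v_1, \dots, v_m$ with $v_j \in \ker(T^{n_j})$, then $T^i|_W = 0$ for $i \geq \max_j n_j$.) The implication $(2) \Rightarrow (3)$ is immediate from Lemma~\ref{upper-tri-lemma}.

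For $(3) \Rightarrow (4)$: suppose $T$ is strictly triangular with respect to some well-ordered basis, and let $(\B, \leq)$ be an arbitrary well-ordered basis with respect to which $T$ is merely triangular; I must show $T$ is in fact strictly triangular with respect to $(\B, \leq)$. The key observation is that $(3)$ forces $V = \bigcup_{i=1}^\infty \ker(T^i)$ — this follows by reversing Lemma~\ref{upper-tri-lemma}'s construction, or more cleanly: a strictly triangular $T$ with respect to a well-ordered basis satisfies, for each basis vector $v$, that $T$ kills $v$ after finitely many applications, by a descending-chain argument on the (necessarily well-founded) supports, exactly as in the proof of Proposition~\ref{upper-tri-prop}(1). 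So $(3) \Rightarrow (2)$. Granting $V = \bigcup_i \ker(T^i)$, now suppose $T(v) \in \langle\{u \in \B \mid u \leq v\}\rangle$ for some $v \in \B$ but the coefficient of $v$ in $T(v)$ is nonzero, say $\pi_v T(v) = av$ with $a \neq 0$. Then I claim $v \notin \bigcup_i \ker(T^i)$: using triangularity, $\pi_v T^i(v) = a^i v \neq 0$ for all $i$ (the ``lower" terms can never feed back into the $v$-coordinate because $T$ is triangular and $v$ is the largest element in the support of $T(v)$ that could do so — more precisely, $\pi_v T \pi_u = 0$ for $u < v$ is false in general, but $\pi_v T^i(v)$ picks up exactly $a^i v$ from the ``diagonal" path; one argues that any other contribution to $\pi_v T^i(v)$ would require a path passing through some $u > v$, impossible). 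This contradicts $(2)$, so $\pi_v T(v) = 0$, i.e.\ $T$ is strictly triangular with respect to $(\B, \leq)$.

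The implication $(4) \Rightarrow (5)$ is short: $(4)$ in particular asserts $T$ is triangularizable, and fixing a well-ordered basis $(\B, \leq)$ for which $T$ is (strictly) triangular, for any $a \neq 0$ and any $v = \sum a_i v_i \in \ker(T - aI)\setminus\{0\}$ with $v_1 < \cdots < v_n$ in the support, looking at the $v_n$-coordinate of $(T-aI)(v)$ and using strict triangularity gives $-a a_n v_n = 0$, forcing $a_n = 0$, a contradiction; hence $\ker(T - aI) = 0$ for $a \neq 0$. For $\ker(T - 0\cdot I) = \ker(T) \neq 0$: since $V \neq 0$, the least basis vector $v$ of $\B$ satisfies $T(v) \in \langle\{u < v\}\rangle = 0$, so $v \in \ker(T)$. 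Finally $(5) \Rightarrow (2)$: by Theorem~\ref{tri-characterization}, triangularizability gives $V = \bigoplus_{a \in k} \bigcup_{i=1}^\infty \ker((T-aI)^i)$, and the hypothesis that $\ker(T-aI) = 0$ for $a \neq 0$ forces $\bigcup_i \ker((T-aI)^i) = 0$ for $a \neq 0$ (if $(T-aI)^i$ kills a nonzero vector but $T - aI$ does not, take a minimal such $i$ and note $(T-aI)^{i-1}$ applied to that vector is a nonzero element of $\ker(T-aI)$), leaving $V = \bigcup_i \ker(T^i)$.

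I expect the main obstacle to be the coordinate-tracking argument in $(3) \Rightarrow (4)$ — specifically the claim that for a triangular $T$ with $\pi_v T(v) = av$, one has $\pi_v T^i(v) = a^i v$, equivalently that no ``indirect path" through other basis vectors can alter the $v$-coordinate of $T^i(v)$. The cleanest way to see this is: write $T^i(v) = \sum_{u \in \B} c_u u$; any $u$ appearing satisfies $u \leq v$ since $\langle\{w \leq v\}\rangle$ is $T$-invariant (as $T$ is triangular and $\B$ well-ordered, this is immediate). Then $\pi_v T^i(v)$ is computed by expanding $T^i$ and tracking only terms that end at $v$; since every intermediate vector is $\leq v$ and $T$ applied to anything $< v$ stays in $\langle\{w < v\}\rangle$, the only way to land back on $v$ is to have started and stayed at $v$, giving the contribution $a^i v$. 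I will phrase this carefully using the projections $\pi_u$ and the observation (already used in Proposition~\ref{upper-tri-prop}) that $\pi_v T \pi_u \neq 0$ implies $u \leq v$.
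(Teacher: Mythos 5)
Your proof is correct, and it reaches the same key ideas as the paper (the definition-unwinding for $(1)\Leftrightarrow(2)$, the ``diagonal coefficient'' induction showing $\pi_vT^i(v)=a^iv$, and the use of Theorem~\ref{tri-characterization} plus a kernel argument for $(5)\Rightarrow(2)$), but your implication cycle is organized differently, and that costs you one extra lemma. The paper proves $(2)\Rightarrow(4)$ directly (Lemma~\ref{upper-tri-lemma} gives triangularizability, and the coefficient induction gives strictness with respect to \emph{every} triangularizing basis), so that $(4)\Rightarrow(3)$ is a tautology; you instead prove $(2)\Rightarrow(3)$ and must then establish $(3)\Rightarrow(2)$ inside your step $(3)\Rightarrow(4)$, i.e., that strict triangularizability forces $V=\bigcup_i\ker(T^i)$. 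That auxiliary fact is true, but be careful how you justify it: a bare ``descending-chain argument on supports'' is not enough, since a well-ordered basis can have arbitrarily long finite descending chains without any infinite one, so the mere existence of long nonzero paths below $v$ yields no contradiction. What makes your step work is precisely your citation of Proposition~\ref{upper-tri-prop}(1): it places each basis vector $v$ in a finite-dimensional $T$-invariant subspace $\langle U_n\rangle$, and the restriction of $T$ to $\langle U_n\rangle$ is strictly triangular with respect to the finite totally ordered set $U_n$, hence nilpotent there, giving $T^m(v)=0$; you should spell out that two-line completion rather than leave it at the level of ``as in Proposition~\ref{upper-tri-prop}(1).'' Minor further differences: in $(4)\Rightarrow(5)$ you exhibit a nonzero kernel element directly from the least basis vector, where the paper invokes Lemma~\ref{nontriv-invar-subspace} (yours is slightly more self-contained), and in $(5)\Rightarrow(2)$ you use condition $(4)$ of Theorem~\ref{tri-characterization} with a minimal-exponent argument, where the paper uses condition $(2)$ together with Lemma~\ref{lemma:eigenspaces}; both are fine. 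The paper's ordering of the implications is marginally more economical, since it never needs the strictly-triangularizable-implies-locally-nilpotent step at all.
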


\begin{proof}
We shall show that $(1) \Leftrightarrow (2) \Rightarrow (4) \Rightarrow (3) \Rightarrow (5) \Rightarrow (2)$.

$(1) \Leftrightarrow (2)$ $T$ is topologically nilpotent if and only if for every open neighborhood $\U$ of $0$ there exists $n \in \Z^+$ such that $T^n \in \U$. By our description of the function topology, this is equivalent to: for every finite-dimensional subspace $W$ of $V$ there exists $n \in \Z^+$ such that $T^n(W)=0$. That statement is clearly equivalent to $V = \bigcup_{i=1}^\infty \ker (T^i)$.  

$(2) \Rightarrow (4)$  If $T$ satisfies (2), then it is triangularizable, by Lemma~\ref{upper-tri-lemma}. Now let $(\B, \leq)$ be a well-ordered basis for $V$ with respect to which $T$ is triangular, and let $v \in \B$. Write $T(v) = a v + \sum_{u < v} a_u u$ for some $u \in \B$ and $a, a_u \in k$, and suppose that $a \neq 0$. Then for all $n\in \Z^+$ we have $T^n(v) = a^n v + w$ for some $w \in \langle \{u \in \B \mid u < v\}\rangle$, and hence $T^n(v) \neq 0$, producing a contradiction. Therefore $a = 0$, and hence $T(v) \in \langle \{u \in \B \mid u < v\}\rangle$ for all $v \in \B$. That is, $T$ is strictly triangular with respect to $(\B, \leq)$.

$(4) \Rightarrow (3)$ This is a tautology.

$(3) \Rightarrow (5)$ Suppose that $T$ is strictly triangular with respect to a well-ordered basis $(\B, \leq)$ for $V$. Then $T$ is triangularizable, by definition. Now, let $v \in V$, and write $v = a_u u + \sum_{w < u}a_w w$ for some $u, w \in \B$ and $a_u, a_w\in k$. Then $T$ being strictly triangular with respect to $(\B, \leq)$ implies that the coefficient of $u$ in $T(v)$, when expressed as a linear combination of elements of $\B$, is zero. Therefore, given $a \in k$, we can have $T(v) = av$ only if $a=0$. That is, $a=0$ whenever $\ker(T-aI) \neq 0$. On the other hand, since $T$ is triangularizable and $V\neq 0$, we have $\ker(T-aI) \neq 0$ for some $a \in k$, by Lemma~\ref{nontriv-invar-subspace}, from which (5) follows.

$(5) \Rightarrow (2)$ Suppose that $T$ satisfies (5), and let $v \in V$. By Theorem~\ref{tri-characterization}, there is a polynomial $p(x) \in k[x]\setminus k$ that factors into linear terms in $k[x]$, such that $v \in \ker (p(T))$. By Lemma~\ref{lemma:eigenspaces} and (5), this means that $p(x)$ can be taken to be $x^n$ for some $n \in \Z^+$, and therefore $T^n(v) = 0$. It follows that $V = \bigcup_{i=1}^\infty \ker (T^i)$.
\end{proof}

\section{Transformations Satisfying a Polynomial} \label{poly-section}

As we saw in Theorem~\ref{tri-characterization}, every triangularizable transformation $T$ satisfies a polynomial on each finite-dimensional $T$-invariant subspace. It is therefore natural to ask whether more can be said about transformations that satisfy a single polynomial on the entire space. That indeed can be quickly accomplished with the help of the following classical result from~\cite{Koethe}. (See also~\cite[Corollary 3.3]{BGMS} for a noncommutative generalization.)

\begin{theorem}[K\"{o}the]\label{Koethe-thrm}
Let $R$ be a commutative artinian ring. Then every $R$-module is a direct sum of cyclic $R$-modules if and only if $R$ is a principal ideal ring.
\end{theorem}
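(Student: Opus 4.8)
This is the classical theorem of K\"othe~\cite{Koethe}; the plan is the standard route, so I sketch only the main lines. \emph{First I would reduce to the local case.} A commutative artinian ring is a finite direct product $R=R_{1}\times\cdots\times R_{m}$ of local artinian rings, and for such a product every $R$-module $M$ decomposes as $M=\bigoplus_{i}e_{i}M$, with $e_{i}M$ an $R_{i}$-module which is cyclic over $R_{i}$ precisely when it is cyclic over $R$; moreover $R$ is a principal ideal ring iff every $R_{i}$ is. Hence both halves of the equivalence may be checked one factor at a time, and I may assume $R$ is local with maximal ideal $\mathfrak m$, residue field $k=R/\mathfrak m$, and $\mathfrak m^{n}=0$. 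By Nakayama, $R$ is a principal ideal ring iff $\mathfrak m$ is principal iff $\dim_{k}(\mathfrak m/\mathfrak m^{2})\le 1$; and when $\mathfrak m=(t)$ the only ideals are $(t^{0})\supseteq(t^{1})\supseteq\cdots\supseteq(t^{n})=0$, so $R$ is a chain ring whose cyclic modules are exactly the $R/(t^{i})$.

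\emph{The direction ``chain ring $\Rightarrow$ every module is a direct sum of cyclics'' is the module analogue of Pr\"ufer's theorem} that a bounded abelian group is a direct sum of cyclic groups, and I would run the same induction, on $n$ (with $\mathfrak m^{n}=0$). The case $n=1$ gives $R=k$ and is just the existence of bases. For $n>1$ and $M$ arbitrary, $tM$ is annihilated by $t^{n-1}$, hence is a module over the chain ring $R/(t^{n-1})$, so by induction $tM=\bigoplus_{\lambda}Rx_{\lambda}$ with $Rx_{\lambda}\cong R/(t^{i_{\lambda}})$, $1\le i_{\lambda}\le n-1$. Pick $y_{\lambda}\in M$ with $ty_{\lambda}=x_{\lambda}$; then $Ry_{\lambda}\cong R/(t^{i_{\lambda}+1})$, and the elements $t^{i_{\lambda}}y_{\lambda}$ (which equal $t^{i_{\lambda}-1}x_{\lambda}$) form a $k$-basis of $\mathrm{soc}(tM)$, which I extend to a $k$-basis of $\mathrm{soc}(M)$ by adjoining elements $z_{\mu}$, each spanning a copy of $k$. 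Set $N=\bigoplus_{\lambda}Ry_{\lambda}\oplus\bigoplus_{\mu}Rz_{\mu}$. By construction $tN=tM$ and $\mathrm{soc}(N)=\mathrm{soc}(M)$, and these two facts force $N=M$: for $x\in M$ write $tx=ty$ with $y\in N$ (possible since $tx\in tM=tN$), so that $x-y\in\mathrm{soc}(M)=\mathrm{soc}(N)\subseteq N$. Directness of the sum follows by applying $t$ to a relation and using that the chosen generators restrict to an independent set in $\mathrm{soc}(M)$. Nilpotence of $\mathfrak m$ (boundedness) is what makes this work; without it one would be asserting the false analogue that every abelian $p$-group is a direct sum of cyclics.

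\emph{For the converse I would argue the contrapositive.} If $R$ (local artinian, as above) is not a principal ideal ring then $\dim_{k}(\mathfrak m/\mathfrak m^{2})\ge 2$, so I may choose an ideal $J$ with $\mathfrak m^{2}\subseteq J\subsetneq\mathfrak m$ and $\dim_{k}(\mathfrak m/J)=2$ (any such $k$-subspace of $\mathfrak m$ is automatically an ideal, since $R$ acts on $\mathfrak m/\mathfrak m^{2}$ through $k$). Then $\bar R=R/J$ is local artinian with residue field $k$, with $\bar{\mathfrak m}^{2}=0$ and $\dim_{k}\bar{\mathfrak m}=2$, hence of length $3$ with $2$-dimensional socle $\bar{\mathfrak m}$. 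Let $E$ be the injective hull over $\bar R$ of the residue field $k$. Then $E$ is indecomposable with simple ($1$-dimensional) socle, and by Matlis duality $\mathrm{length}(E)=\mathrm{length}(\bar R)=3$. Viewed as a module over $R$ (via $R\twoheadrightarrow\bar R$), $E$ is still indecomposable, since an additive endomorphism of $E$ is $R$-linear iff it is $\bar R$-linear. But $E$ is not cyclic: $\bar R$ is the only cyclic $\bar R$-module of length $3$, and its socle has dimension $2\ne 1$, so $E\not\cong\bar R$; and since $J$ annihilates $E$, it is not cyclic over $R$ either. An indecomposable module which is not cyclic cannot be a direct sum of nonzero cyclic modules, so $R$ fails the hypothesis.

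\emph{Expected main obstacle.} The reduction and the Pr\"ufer-type induction are essentially bookkeeping once set up, and the induction is finite, so no transfinite machinery is needed. The genuine difficulty is the converse: the hypothesis supplies only the abstract non-principality of some ideal, and one must turn that into a concrete module that resists cyclic decomposition. The key move is to pass to the smallest offending quotient $\bar R$ (the square-zero ring with $2$-dimensional maximal ideal) and to exploit its failure to be self-injective by using the injective hull of its residue field; recognizing that this is the right object to examine is the non-obvious part of the argument.
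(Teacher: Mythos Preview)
The paper does not prove this theorem at all: it is stated as a classical result of K\"othe, with only a citation to the original paper~\cite{Koethe} (and to a noncommutative generalization), and is then used as a black box to derive Corollary~\ref{poly-thrm}. So there is no ``paper's own proof'' to compare against.

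Your sketch is a correct outline of a standard proof. The reduction to the local case is routine, and the Pr\"ufer-type induction on the nilpotency index of $\mathfrak m$ for the forward direction is the expected argument; the two equalities $tN=tM$ and $\mathrm{soc}(N)=\mathrm{soc}(M)$ do force $N=M$ exactly as you say, and the directness verification (apply $t$, use directness of $\bigoplus Rx_{\lambda}$ to push each coefficient into $(t^{i_{\lambda}})$, then reduce to a $k$-linear relation in the socle) goes through, though it is compressed in your write-up. For the converse, passing to the length-$3$ quotient $\bar R$ with square-zero maximal ideal of $k$-dimension $2$ and exhibiting the injective hull $E=E_{\bar R}(k)$ as an indecomposable non-cyclic module is exactly the right move; the length computation via Matlis duality and the socle comparison with $\bar R$ cleanly rule out cyclicity. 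One could alternatively write $E$ down explicitly as $\Hom_{k}(\bar R,k)$ and avoid invoking Matlis duality, but what you have is fine.
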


Applying K\"{o}the's theorem to the linear algebra setting yields the following extension of the rational canonical form to transformations of an arbitrary vector space that satisfy a polynomial. This was also observed by Radjabalipour in~\cite[Theorem 1.5]{Radjabalipour}, using a more elementary approach.

\begin{corollary} \label{poly-thrm}
Let $k$ be a field, $V$ a $k$-vector space, $T \in \End_k(V)$, and $p(x) \in k[x]\setminus k$ such that $p(T)=0$. Then $$V=\bigoplus_{\lambda \in \Lambda} \langle \{v_\lambda, T(v_\lambda), \dots, T^{n-1}(v_\lambda) \}\rangle$$ for some $v_\lambda \in V$, where $n$ is the degree of $p(x)$. 
\end{corollary}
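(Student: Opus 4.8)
The plan is to deduce this from K\"{o}the's theorem (Theorem~\ref{Koethe-thrm}) by passing to a suitable quotient ring of $k[x]$. View $V$ as a $k[x]$-module via $x \mapsto T$, as in Theorem~\ref{tri-characterization}(7). Since $p(T) = 0$, the ideal $\langle p(x)\rangle$ annihilates $V$, so $V$ is actually a module over the quotient ring $R = k[x]/\langle p(x)\rangle$. First I would observe that $R$ is a commutative artinian ring: it is a finite-dimensional $k$-algebra (of dimension $n = \deg p$), hence artinian, and it is a principal ideal ring, being a quotient of the principal ideal domain $k[x]$. Therefore K\"{o}the's theorem applies and tells us that $V$, as an $R$-module, is a direct sum of cyclic $R$-modules: $V = \bigoplus_{\lambda \in \Lambda} R w_\lambda$ for some $w_\lambda \in V$.

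Next I would translate each cyclic summand back into the language of $T$ and bases. A cyclic $R$-module $R w_\lambda$ is spanned over $k$ by $\{w_\lambda, x w_\lambda, x^2 w_\lambda, \dots\}$, which, since $x$ acts as $T$, is $\{w_\lambda, T(w_\lambda), T^2(w_\lambda), \dots\}$. Because $x^n \equiv -(\text{lower-degree terms}) \pmod{\langle p(x)\rangle}$ (after normalizing $p$ to be monic, which we may do without changing $\langle p(x)\rangle$ or the hypothesis $p(T)=0$), every power $T^m(w_\lambda)$ with $m \geq n$ lies in the span of $w_\lambda, T(w_\lambda), \dots, T^{n-1}(w_\lambda)$. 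Hence $R w_\lambda = \langle \{w_\lambda, T(w_\lambda), \dots, T^{n-1}(w_\lambda)\}\rangle$, and setting $v_\lambda = w_\lambda$ gives exactly the claimed decomposition $V = \bigoplus_{\lambda \in \Lambda} \langle \{v_\lambda, T(v_\lambda), \dots, T^{n-1}(v_\lambda)\}\rangle$.

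There is essentially no serious obstacle here; the proof is a short dictionary translation plus an invocation of K\"{o}the's theorem. The one point that requires a word of care is that the statement as written lists $n$ vectors $v_\lambda, T(v_\lambda), \dots, T^{n-1}(v_\lambda)$ for \emph{every} $\lambda$, even though an individual cyclic module $R w_\lambda$ may be spanned by fewer than $n$ of these (if the annihilator of $w_\lambda$ in $R$ is strictly larger than $0$, these vectors may be $k$-linearly dependent or some may even vanish). This is not a real problem: the displayed equality only asserts that $V$ is the direct sum of the listed cyclic subspaces, and each such subspace genuinely equals $\langle\{w_\lambda, T(w_\lambda),\dots,T^{n-1}(w_\lambda)\}\rangle$ regardless of whether that spanning set is a basis. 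If one wanted the $v_\lambda, T(v_\lambda), \dots, T^{n-1}(v_\lambda)$ to be linearly independent (a genuine basis of each summand), one would instead need the finer structure theorem for modules over $k[x]/\langle p(x)\rangle$ decomposing into cyclic modules with prime-power annihilators — but the corollary as stated does not demand this, so I would simply note the point and not pursue it further.
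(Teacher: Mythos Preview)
Your proposal is correct and follows essentially the same approach as the paper: pass to the artinian principal ideal ring $R=k[x]/\langle p(x)\rangle$, invoke K\"{o}the's theorem to write $V=\bigoplus_\lambda Rv_\lambda$, and translate each cyclic summand as $\langle v_\lambda, T(v_\lambda),\dots,T^{n-1}(v_\lambda)\rangle$. Your additional remark about the listed vectors not necessarily being linearly independent is a nice clarification that the paper leaves implicit.
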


\begin{proof}
Since $k[x]$ is a principal ideal domain, $R= k[x]/(p(x))$ is a (commutative) principal ideal ring (as its ideals correspond to the ideals of $k[x]$ containing $(p(x))$). Moreover, since $R$ is finite-dimensional as a $k$-vector space (being spanned by $\{1, x, x^2, \dots, x^{n-1}\}$), it is also artinian. Hence, by Theorem~\ref{Koethe-thrm}, every $R$-module is a direct sum of cyclic $R$-modules.

Now, viewing $V$ as an $R$-module, by letting $x$ act as $T$, we see that $V=\bigoplus_{\lambda \in \Lambda} Rv_\lambda$ for some $v_\lambda \in V$, from which the desired conclusion follows.
\end{proof}

The next definition will help us apply Corollary~\ref{poly-thrm} to triangularizable transformations, and thereby extend the Jordan canonical form to transformations of an arbitrary vector space that satisfy a polynomial.

\begin{definition}
Let $k$ be a field, $V$ a finite-dimensional $k$-vector space, and $T \in \End_k(V)$. If there is a basis $\, \{v_0, v_1, \dots, v_n\}$ for $\, V$ such that $T(v_i) = v_{i-1}$ for all $\, 1 \leq i \leq n$ and $T(v_0) = 0$, then we say that $T$ acts as a \emph{left shift transformation} on $\, V$.
\end{definition}

\begin{corollary}
Let $k$ be a field, $V$ a $k$-vector space, $p(x) \in k[x] \setminus k$ a polynomial that factors into linear terms in $k[x]$, and $T \in \End_k(V)$ such that $p(T) = 0$. Then there are finite-dimensional subspaces $\, V_\lambda \subseteq V$ and $a_\lambda \in k$ $(\lambda \in \Lambda)$, such that $\, V = \bigoplus_{\lambda \in \Lambda} V_\lambda$ and $T-a_\lambda I$ acts as a left shift transformation on $\, V_\lambda$, for each $\lambda \in \Lambda$.
\end{corollary}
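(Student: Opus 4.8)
The plan is to decompose $V$ into generalized eigenspaces for $T$, reduce the problem on each of them to a nilpotent transformation, invoke Corollary~\ref{poly-thrm} there, and finally observe that the cyclic summands it produces are precisely left-shift blocks after a relabelling of basis vectors.

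First I would set up the eigenspace decomposition. Since $p(x)$ factors into linear terms, write $p(x) = c\prod_{i=1}^{r}(x-a_i)^{m_i}$ with $c \in k\setminus\{0\}$, the $a_i$ pairwise distinct, and each $m_i \ge 1$. The polynomials $(x-a_1)^{m_1}, \dots, (x-a_r)^{m_r}$ are pairwise relatively prime, so Lemma~\ref{lemma:eigenspaces} applied to their product $c^{-1}p(x)$ yields $V = \ker(p(T)) = \bigoplus_{i=1}^{r} V^{(i)}$, where $V^{(i)} = \ker((T-a_iI)^{m_i})$ (the kernel is all of $V$ because $p(T)=0$). Each $V^{(i)}$ is $T$-invariant, and $S_i := (T-a_iI)|_{V^{(i)}}$ satisfies $S_i^{m_i}=0$.

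Next, for each $i$ I would apply Corollary~\ref{poly-thrm} to $S_i \in \End_k(V^{(i)})$ with the polynomial $x^{m_i}\in k[x]\setminus k$, which $S_i$ annihilates. This gives $V^{(i)} = \bigoplus_{\mu\in M_i} W_{i,\mu}$ with $W_{i,\mu} = \langle\{v_{i,\mu}, S_i(v_{i,\mu}), \dots, S_i^{m_i-1}(v_{i,\mu})\}\rangle$ for suitable $v_{i,\mu}\in V^{(i)}$; each $W_{i,\mu}$ is finite-dimensional and $S_i$-invariant, hence $T$-invariant. Discarding zero summands, we may take every $v_{i,\mu}\neq 0$. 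Now fix one such summand and let $d\ge 0$ be maximal with $S_i^{d}(v_{i,\mu})\neq 0$, so $d\le m_i-1$. Viewing $W_{i,\mu}$ as a cyclic $k[x]/(x^{m_i})$-module generated by $v_{i,\mu}$, its annihilator is the ideal generated by $x^{d+1}$, so $W_{i,\mu}\cong k[x]/(x^{d+1})$ and $\{v_{i,\mu}, S_i(v_{i,\mu}), \dots, S_i^{d}(v_{i,\mu})\}$ is a basis. Relabelling $w_j := S_i^{\,d-j}(v_{i,\mu})$ for $0\le j\le d$, we obtain a basis $\{w_0,\dots,w_d\}$ of $W_{i,\mu}$ with $S_i(w_j)=w_{j-1}$ for $1\le j\le d$ and $S_i(w_0)=S_i^{\,d+1}(v_{i,\mu})=0$, i.e. $T-a_iI$ acts on $W_{i,\mu}$ as a left shift transformation. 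Taking $\Lambda$ to be the disjoint union of the index sets $M_i$ and setting $a_\lambda = a_i$, $V_\lambda = W_{i,\mu}$ for $\lambda = \mu \in M_i$ then produces the asserted decomposition $V = \bigoplus_{\lambda\in\Lambda} V_\lambda$.

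I expect the only step requiring genuine argument, rather than bookkeeping, to be the annihilator/dimension computation that turns the spanning set supplied by Corollary~\ref{poly-thrm} into an actual basis exhibiting the left-shift action (equivalently, the observation that $\{v_{i,\mu}, S_i(v_{i,\mu}),\dots, S_i^{d}(v_{i,\mu})\}$ is linearly independent). This is elementary module theory over $k[x]/(x^{m_i})$, so no serious obstacle is anticipated.
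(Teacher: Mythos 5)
Your argument is correct, but it takes a different route from the paper. The paper's proof is two lines: apply Corollary~\ref{poly-thrm} directly to $T$ and $p(x)$ to write $V$ as a direct sum of finite-dimensional $T$-invariant subspaces (each annihilated by $p(T)$, which factors into linear terms), and then invoke the classical Theorem~\ref{cl-tri-thrm} (condition (2) $\Rightarrow$ (4), the Jordan canonical form) on each summand, whose Jordan blocks are exactly the spaces on which $T-aI$ acts as a left shift. You instead decompose first into generalized eigenspaces via Lemma~\ref{lemma:eigenspaces}, reduce to the nilpotent transformations $S_i=(T-a_iI)|_{V^{(i)}}$, apply Corollary~\ref{poly-thrm} to each $S_i$ with the polynomial $x^{m_i}$, and then verify by hand (the annihilator computation over $k[x]/(x^{m_i})$) that every nonzero cyclic summand is a shift block. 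Both orders of decomposition work; your version has the merit of being self-contained in its final step --- you effectively reprove the nilpotent Jordan-block fact rather than citing the finite-dimensional Jordan form --- at the cost of length, while the paper's version is shorter by leaning on Theorem~\ref{cl-tri-thrm}(4). Your annihilator argument is sound: since the ideals of $k[x]/(x^{m_i})$ are the images of the $(x^j)$, the annihilator of $v_{i,\mu}$ is generated by $x^{d+1}$ with $d$ maximal such that $S_i^{d}(v_{i,\mu})\neq 0$, so $\{v_{i,\mu}, S_i(v_{i,\mu}),\dots,S_i^{d}(v_{i,\mu})\}$ is indeed linearly independent and the relabelling exhibits the left shift.
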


\begin{proof}
By Corollary~\ref{poly-thrm}, $V$ can be written as a direct sum of finite-dimensional $T$-invariant subspaces. The desired conclusion now follows from applying Theorem~\ref{cl-tri-thrm} to each of these subspaces.
\end{proof}

\section{Double-Centralizer}

We conclude the paper by generalizing the following result for (finite) matrices to transformations of vector spaces of arbitrary dimension. See, e.g.,~\cite[Chapter 1, Theorem 7]{ST} or~\cite[Theorem 1]{PL} for proofs of this result.

\begin{theorem}[Classical Double-Centralizer Theorem] \label{theorem:classicaldoublecent}
Let $k$ be a field, $n \in \Z^+$, $\M_n(k)$ the $k$-algebra of all $n\times n$ matrices over $k$, and $T \in \M_n(k)$. Then $C(C(T)) = k[T]$.
\end{theorem}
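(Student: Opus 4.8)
The inclusion $k[T] \subseteq C(C(T))$ is immediate --- every polynomial in $T$ commutes with everything in $C(T)$ --- so the plan is to prove the reverse inclusion $C(C(T)) \subseteq k[T]$ by pinning down the action of an arbitrary $S \in C(C(T))$ on a single, carefully chosen vector. I would view $V = k^n$ as a module over the principal ideal domain $k[x]$, with $x$ acting as $T$, and let $\mu(x)$ be the minimal polynomial of $T$. Invoking the invariant-factor form of the structure theorem for finitely generated $k[x]$-modules (equivalently, the rational canonical form of $T$), I would fix a decomposition $V = W \oplus W'$ into $T$-invariant subspaces in which $W = k[T]v$ is cyclic with $\ann_{k[x]}(v) = (\mu(x))$; here $v$ generates the cyclic summand belonging to the last invariant factor, which equals $\mu(x)$, and $W'$ is the sum of the remaining summands. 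The point of this choice is twofold: because $W$ and $W'$ are $T$-invariant, the projection $\pi \colon V \to W$ along $W'$ commutes with $T$, so $\pi \in C(T)$; and because $(\mu(x))$ is the smallest possible annihilator of a vector in $V$, the element $v$ is ``maximal'', in the sense used below.

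Given $S \in C(C(T))$, the first step is to show $Sv \in k[T]v$. Since $\pi \in C(T)$ and $\pi v = v$, one has
$$Sv = S\pi v = \pi S v \in W = k[T]v,$$
so $Sv = g(T)v$ for some $g(x) \in k[x]$. Replacing $S$ by $S' = S - g(T)$ --- still a member of $C(C(T))$, as $g(T) \in k[T]$ --- I would then reduce to proving that $S'v = 0$ forces $S' = 0$. For an arbitrary $u \in V$, the rule $h(T)v \mapsto h(T)u$ is a well-defined $k[x]$-module homomorphism $W \to V$: if $h_1(T)v = h_2(T)v$ then $\mu \mid h_1 - h_2$, hence $(h_1 - h_2)(T) = 0$ on all of $V$, so $h_1(T)u = h_2(T)u$. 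Composing this homomorphism with $\pi$ gives $\phi \in \End_k(V)$ with $\phi T = T\phi$ and $\phi(v) = u$; thus $\phi \in C(T)$, and therefore $S'(u) = S'\phi(v) = \phi S'(v) = \phi(0) = 0$. Since $u$ is arbitrary, $S' = 0$, i.e. $S = g(T) \in k[T]$, which finishes the argument.

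Everything in the second paragraph is formal once the right $v$ and the $T$-equivariant projection $\pi$ are in hand, so the one step carrying real weight --- and the main obstacle if a self-contained proof is wanted --- is the input from the structure theory: producing the maximal vector $v$ together with a $k[x]$-module complement $W'$ to $W = k[T]v$. The heart of the matter is the non-derogatory case $\deg \mu = n$, where $W = V$, where $C(T) = k[T]$ directly (any $S$ commuting with $T$ agrees with $g(T)$ on the $k$-spanning set $\{v, Tv, T^2 v, \dots\}$ once $Sv = g(T)v$), and where $C(C(T)) = k[T]$ is then immediate; the general case is exactly this case transplanted into $V$ via $\pi$, which is why the splitting of $W$ as a direct summand is precisely what must be imported.
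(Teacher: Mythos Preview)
Your argument is correct and is essentially the standard proof of this classical result. Note, however, that the paper does not supply its own proof of Theorem~\ref{theorem:classicaldoublecent}: it simply cites \cite[Chapter~1, Theorem~7]{ST} and \cite[Theorem~1]{PL}. So there is no ``paper's proof'' to compare against here; the theorem is imported as a known fact and then used as a black box in the proof of Proposition~\ref{double-cent-prop}.

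For what it is worth, your approach matches the spirit of the cited proofs --- in particular Lagerstrom's short argument in \cite{PL}, which also hinges on producing a vector $v$ whose annihilator is the full minimal polynomial and then, for each $u \in V$, exhibiting a $T$-commuting map sending $v$ to $u$. Your use of the invariant-factor decomposition to obtain both the maximal cyclic vector $v$ and the $T$-equivariant projection $\pi$ onto $k[T]v$ is exactly the right input, and the reduction to $S' v = 0 \Rightarrow S' = 0$ via the well-definedness of $h(T)v \mapsto h(T)u$ is clean and complete.
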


We require a couple of standard lemmas.

\begin{lemma} \label{l.Ring}
If $R$ is a Hausdorff topological ring, then the centralizer of any subset of $R$ is closed in $R$.
\end{lemma}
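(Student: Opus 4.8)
The plan is to reduce to the case of a single element $x \in R$ and realize its centralizer as the preimage of a closed set under a continuous map. Fix $x \in R$ and define $\varphi_x \colon R \to R$ by $\varphi_x(r) = rx - xr$. I would first check that $\varphi_x$ is continuous: the map $r \mapsto (r,x)$ from $R$ to $R \times R$ is continuous (its components are the identity map and a constant map), so composing with the multiplication map $R \times R \to R$, which is continuous by hypothesis, shows that $r \mapsto rx$ is continuous; symmetrically $r \mapsto xr$ is continuous. Hence $r \mapsto (rx, xr)$ is continuous into $R \times R$, and composing with the continuous subtraction map $R \times R \to R$ shows that $\varphi_x$ is continuous.

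Next I would use the Hausdorff hypothesis: in a Hausdorff space every singleton is closed, so $\{0\}$ is closed in $R$, and therefore $C_R(x) = \varphi_x^{-1}(\{0\})$ is closed in $R$. Finally, for an arbitrary subset $X \subseteq R$, we have $C_R(X) = \bigcap_{x \in X} C_R(x)$, which is an intersection of closed subsets of $R$ and hence closed. This completes the argument.

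I do not expect a genuine obstacle here; the statement is essentially folklore and the proof is short. The only point requiring care is the continuity bookkeeping — specifically, invoking that the multiplication $R \times R \to R$ is \emph{jointly} continuous (so that fixing one argument to be the constant $x$ still yields a continuous unary map) and that addition/negation are continuous, both of which are part of the definition of a topological ring recalled earlier in the excerpt.
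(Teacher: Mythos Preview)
Your proof is correct and rests on the same idea as the paper's: the commutator map $r \mapsto rx - xr$ is continuous and $\{0\}$ is closed by Hausdorffness. The paper phrases this as a contradiction argument with explicit neighborhoods (picking $r \in \overline{C(X)} \setminus C(X)$ and a witness $x$), whereas you give the cleaner direct version via $C_R(X) = \bigcap_{x \in X} \varphi_x^{-1}(\{0\})$; the content is the same.
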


\begin{proof}
Let $X$ be a subset of $R$, and let $\overline{C(X)}$ denote the closure of $C(X)$ in $R$. Suppose that $\overline{C(X)} \neq C(X)$. Then there must be some $r \in \overline{C(X)} \setminus C(X)$, and hence $rx - xr \neq 0$ for some $x \in X$. Since the topology is Hausdorff, there must be an open neighborhood $\U$ of $rx-xr$ such that $0 \notin \U$. By the continuity of the operations, we can find an open neighborhood $\mathcal{V}$ of $r$ such that $\mathcal{V}x - x \mathcal{V} \subseteq \U$. Since $C(X)$ is dense in $\overline{C(X)}$, there is some $r' \in C(X)$ such that $r' \in \mathcal{V}$. But then $$0=r'x-xr' \in \mathcal{V}x - x \mathcal{V} \subseteq \U,$$ contradicting $0 \notin \U$. Thus $\overline{C(X)} = C(X)$, i.e., $C(X)$ is closed.
\end{proof}

\begin{lemma} \label{lemma:doublecent}
Let $k$ be a field, let $\, V = W \oplus U$ be $k$-vector spaces, and let $T \in \End_k(V)$. If $\, W$ and $U$ are $T$-invariant, then $\, W$ and $U$ are also invariant under every element of $C(C(T))$.
\end{lemma}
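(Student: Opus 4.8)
The plan is to show that the projection $\pi_W \in \End_k(V)$ onto $W$ along $U$ lies in $C(C(T))$; once this is established, the claim follows immediately, since for any $D \in C(C(T))$ we have $D\pi_W = \pi_W D$ (as $\pi_W \in C(C(T))$ and $D$ centralizes $C(C(T))$... wait, that is not quite the right direction). Let me reconsider: the cleanest route is to observe that $\pi_W \in C(T)$, because $W$ and $U$ are both $T$-invariant, so $T$ commutes with the projection onto either summand. Then any $D \in C(C(T))$ must commute with $\pi_W$ in particular. From $D\pi_W = \pi_W D$ it follows that $D(W) = D(\pi_W V) = \pi_W(DV) \subseteq \pi_W V = W$, so $W$ is $D$-invariant, and symmetrically $U$ is $D$-invariant using $\pi_U = I - \pi_W$.

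So the key steps, in order, are: first, verify that $\pi_W$ (the projection onto $W$ with kernel $U$) commutes with $T$. This is where the hypothesis that both $W$ and $U$ are $T$-invariant is used: for $v = w + u$ with $w \in W$, $u \in U$, one has $T\pi_W(v) = T(w)$ and $\pi_W T(v) = \pi_W(T(w) + T(u)) = T(w)$ since $T(w) \in W$ and $T(u) \in U$; hence $\pi_W \in C(T)$. Second, take an arbitrary $D \in C(C(T))$; by definition $D$ commutes with every element of $C(T)$, in particular with $\pi_W$. Third, deduce $D(W) \subseteq W$ from $D\pi_W = \pi_W D$, and $D(U) \subseteq U$ from $D(I - \pi_W) = (I - \pi_W)D$ (equivalently, note $\pi_U = I - \pi_W$ is also in $C(T)$ by the same argument). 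That completes the proof.

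I do not expect any real obstacle here; the only thing to be careful about is making sure the projections are genuinely well-defined elements of $\End_k(V)$ (which they are, since $V = W \oplus U$ as an internal direct sum) and that the verification $\pi_W T = T \pi_W$ genuinely uses both invariance hypotheses — dropping either one breaks it. One could phrase it even more slickly by noting that the pair of projections $\{\pi_W, \pi_U\}$ lies in $C(T)$ and that a subspace is invariant under a transformation $D$ precisely when $D$ commutes with the idempotent projecting onto it along a complement, but spelling out the inclusion $D(\pi_W V) = \pi_W(DV)$ directly is shortest. This lemma is of course the engine for the forthcoming double-centralizer theorem, since it lets one transfer the decomposition of Corollary~\ref{poly-thrm} through $C(C(T))$.
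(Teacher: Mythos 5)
Your proposal is correct and follows essentially the same route as the paper: both arguments observe that the projection onto $W$ with kernel $U$ lies in $C(T)$ because $W$ and $U$ are $T$-invariant, and then use that any element of $C(C(T))$ commutes with this projection to deduce the invariance of $W$ (and of $U$, via the complementary projection). Aside from the brief self-correction at the start and the extra explicit verification that $\pi_W T = T\pi_W$, there is no substantive difference.
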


\begin{proof}
Let $\pi \in \End_k(V)$ be the projection of $V$ onto $W$ with kernel $U$, and let $S \in C(C(T))$. Since $W$ and $U$ are $T$-invariant, we have $\pi \in C(T)$, and hence $S \pi = \pi S$. Thus $$S(W) = S\pi (W) = \pi S(W) \subseteq \pi(V) = W,$$ and similarly $S(U) \subseteq U$.
\end{proof}

\begin{proposition} \label{double-cent-prop}
Let $k$ be a field, $V$ a $k$-vector space, and $T \in \End_k(V)$. Suppose that there are finite-dimensional $T$-invariant subspaces $\, V_\lambda \subseteq V$ $(\lambda \in \Lambda)$ such that $\, V = \bigoplus_{\lambda \in \Lambda} V_\lambda$. Then $C(C(T))=\overline{k[T]}$.
\end{proposition}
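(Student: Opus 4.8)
The plan is to establish the two inclusions $\overline{k[T]} \subseteq C(C(T))$ and $C(C(T)) \subseteq \overline{k[T]}$ separately. The first is immediate: every $f(T) \in k[T]$ commutes with every element of $C(T)$, so $k[T] \subseteq C(C(T))$; and since $\End_k(V)$ is a Hausdorff topological ring, $C(C(T))$ is closed by Lemma~\ref{l.Ring} (applied to the subset $C(T)$). Hence $\overline{k[T]} \subseteq \overline{C(C(T))} = C(C(T))$.

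For the reverse inclusion I would fix $S \in C(C(T))$ together with an arbitrary finite-dimensional subspace $W \subseteq V$, and show that some element of $k[T]$ agrees with $S$ on $W$. This suffices, since by our description of the function topology the sets $\{H \in \End_k(V) \mid H|_W = S|_W\}$, as $W$ ranges over finite-dimensional subspaces, form a neighborhood base at $S$. Because each $V_\lambda$ is finite-dimensional and $V = \bigoplus_{\lambda \in \Lambda} V_\lambda$, the space $W$ is contained in a finite subsum $V' = V_{\lambda_1} \oplus \cdots \oplus V_{\lambda_n}$, which is finite-dimensional and $T$-invariant, and whose complement $U = \bigoplus_{\lambda \notin \{\lambda_1, \dots, \lambda_n\}} V_\lambda$ is likewise $T$-invariant. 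By Lemma~\ref{lemma:doublecent}, both $V'$ and $U$ are then invariant under $S$, so $S|_{V'} \in \End_k(V')$ is well-defined.

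The crux is to show that $S|_{V'}$ lies in $C_{\End_k(V')}(C_{\End_k(V')}(T|_{V'}))$, for then the Classical Double-Centralizer Theorem (Theorem~\ref{theorem:classicaldoublecent}) gives $S|_{V'} \in k[T|_{V'}]$, i.e. $S|_{V'} = f(T)|_{V'}$ for some $f \in k[x]$; restricting further to $W \subseteq V'$ yields $S|_W = f(T)|_W$, as required. To verify the crux, take any $R' \in C_{\End_k(V')}(T|_{V'})$ and extend it to $\tilde R \in \End_k(V)$ by letting $\tilde R$ act as $R'$ on $V'$ and as the zero transformation on $U$. Since $V'$ and $U$ are $T$-invariant and $R'$ commutes with $T|_{V'}$, the map $\tilde R$ commutes with $T$, so $\tilde R \in C(T)$ and hence $S\tilde R = \tilde R S$. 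As $V'$ is invariant under both $S$ and $\tilde R$ (and $\tilde R|_{V'} = R'$), restricting this identity to $V'$ gives $S|_{V'} R' = R' S|_{V'}$. Since $R'$ was arbitrary, $S|_{V'}$ double-centralizes $T|_{V'}$.

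I expect the only care needed to be bookkeeping: confirming that $W$ embeds in a finite $T$-invariant subsum, that its complement in $V$ is $T$-invariant (so that Lemma~\ref{lemma:doublecent} applies and $S$ preserves $V'$), and that the extension $\tilde R$ genuinely lies in $C(T)$. There is no deep obstacle here — the finite-dimensional case is settled entirely by the classical theorem, and the function topology lets us reduce the infinite-dimensional statement to that case one finite-dimensional subspace at a time.
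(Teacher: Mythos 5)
Your proof is correct and follows essentially the same route as the paper: the easy inclusion via Lemma~\ref{l.Ring}, reduction of an arbitrary finite-dimensional subspace to a finite $T$-invariant subsum, invariance of that subsum under $S$ via Lemma~\ref{lemma:doublecent}, extension of centralizer elements by zero on the complement, and then the Classical Double-Centralizer Theorem on the finite-dimensional piece. No gaps to report.
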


\begin{proof}
Clearly $k[T] \subseteq C(C(T))$. Since, by Lemma~\ref{l.Ring}, $C(C(T))$ is closed, it follows that $\overline{k[T]} \subseteq C(C(T))$.

For the opposite inclusion, let $S \in C(C(T))$, and let $\U$ be an open neighborhood of $S$. Passing to a subset, if necessary, we may assume that $$\U = \{F \in \End_k(V) \mid F|_U = S|_U\}$$ for some finite-dimensional subspace $U$ of $V$. We can find some $\lambda_1, \dots, \lambda_m \in \Lambda$ such that $U \subseteq V_{\lambda_1} \oplus \dots \oplus V_{\lambda_m}$. Letting $W=V_{\lambda_1} \oplus \dots \oplus V_{\lambda_m}$, we have $S(W) \subseteq W$, by Lemma~\ref{lemma:doublecent}. 

Let $H' \in \End_k(W)$ be such that $H'T|_W=T|_WH'$. Extending $H'$ to a map $H \in \End_k(V)$ by letting $H(\bigoplus_{\lambda \in \Lambda \setminus \{\lambda_1, \dots, \lambda_m\}} V_\lambda)=0$, we see that $HT=TH$, and hence also $SH=HS$. Since $S(W) \subseteq W$, we have $S|_WH'=H'S|_W$. Since $H' \in C(T|_W)$ was arbitrary, this shows that $S|_W \in C(C(T|_W))$. Thus, by Theorem~\ref{theorem:classicaldoublecent}, we have $S|_W \in k[T|_W]$. Since $W$ is $T$-invariant, this implies that there is some polynomial $p(x) \in k[x]$ such that $S|_W = p(T)|_W$. Therefore $p(T) \in \mathcal{U}$, and hence $S$ is a limit point of $k[T]$. It follows that $S \in \overline{k[T]}$, and thus $C(C(T))\subseteq \overline{k[T]}$.
\end{proof}

With the help of the next lemma, we can give another generalization of Theorem~\ref{theorem:classicaldoublecent} to infinite-dimensional vector spaces.

\begin{lemma} \label{poly-cl-lemma}
Let $k$ be a field, $V$ a $k$-vector space, $p(x) \in k[x]\setminus k$, and $T \in \End_k(V)$ such that $p(T)=0$. Then $k[T] = \overline{k[T]}$.
\end{lemma}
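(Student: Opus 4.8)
The plan is to prove the stronger assertion that \emph{every} finite-dimensional $k$-subspace of $\End_k(V)$ is closed in the function topology, and then note that $k[T]$ is such a subspace. The second point is immediate: since $p(x) \in k[x]\setminus k$ has some degree $n \geq 1$ and $p(T) = 0$, every power $T^j$ with $j \geq n$ lies in the span of $I, T, \dots, T^{n-1}$, so $k[T] = \langle I, T, \dots, T^{n-1} \rangle$ has $k$-dimension at most $n$. (This is the only place the hypothesis $p(T)=0$ is used, and it is essential: without it $k[T]$ can be infinite-dimensional and need not be closed.)

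For the main point, let $A \subseteq \End_k(V)$ be any finite-dimensional subspace and let $S \in \overline{A}$. The first step is to produce finitely many vectors $v_1, \dots, v_s \in V$ such that the evaluation map $A \to V^s$ given by $a \mapsto (a(v_1), \dots, a(v_s))$ is injective. I would do this by induction on $\dim A$: choose a nonzero $a_1 \in A$ and some $v_1 \in V$ with $a_1(v_1) \neq 0$; then $A' = \{a \in A \mid a(v_1) = 0\}$ is a subspace not containing $a_1$, so $\dim A' < \dim A$, and by the inductive hypothesis there are $v_2, \dots, v_s$ for which evaluation is injective on $A'$; one checks at once that $v_1, \dots, v_s$ then works for $A$.

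The second step uses this to pin down a single element of $A$ agreeing with $S$ everywhere. Put $W_0 = \langle v_1, \dots, v_s \rangle$, a finite-dimensional subspace. For each finite-dimensional subspace $W$ with $W_0 \subseteq W \subseteq V$, the basic open neighborhood $\{F \in \End_k(V) \mid F|_W = S|_W\}$ of $S$ meets the dense set $A$, so there is $a_W \in A$ with $a_W|_W = S|_W$; in particular $a_W(v_i) = S(v_i)$ for all $i$. Thus all the $a_W$ agree on $v_1, \dots, v_s$, so by the injectivity from the first step they coincide; call the common element $a \in A$. Finally, for an arbitrary $v \in V$, applying this with $W = W_0 + \langle v \rangle$ gives $S(v) = a_W(v) = a(v)$, whence $S = a \in A$. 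This shows $A$ is closed, and applying it to $A = k[T]$ yields $k[T] = \overline{k[T]}$.

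I do not anticipate a serious obstacle; the argument is elementary. The one step requiring a little care is the inductive construction of $v_1, \dots, v_s$ — equivalently, the fact that a finite-dimensional subspace of $\End_k(V)$ can be separated from the rest of $\End_k(V)$ using only finitely many point evaluations — and then the bookkeeping that the various approximating elements $a_W$ are forced to be equal. Everything else is a direct unwinding of the definition of the function topology.
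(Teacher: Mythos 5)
Your proof is correct, but it takes a genuinely different route from the paper's. The paper argues directly with polynomials: for each finite-dimensional $W \subseteq V$ it picks a polynomial $q_W$ of least degree with $S|_W = q_W(T)|_W$, uses $p(T)=0$ to bound all these degrees by $\deg p$, chooses $U$ maximizing $\deg q_U$, and then uses the division algorithm to show $q_{U+W} = q_U$ for every $W$, so that $S = q_U(T)$. You instead prove the more general statement that \emph{every} finite-dimensional subspace $A$ of $\End_k(V)$ is closed in the function topology --- via the inductive construction of finitely many vectors $v_1,\dots,v_s$ on which evaluation is injective on $A$, which then forces all the approximants $a_W$ (for $W \supseteq \langle v_1,\dots,v_s\rangle$) to coincide --- and you use $p(T)=0$ only to note that $k[T] = \langle I, T, \dots, T^{n-1}\rangle$ is finite-dimensional. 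Both arguments are complete; the inductive separation step and the bookkeeping are fine (one small wording slip: $A$ need not be dense, you only need that every neighborhood of $S \in \overline{A}$ meets $A$, which is what you actually use). What each approach buys: yours isolates a clean, reusable topological fact (finite-dimensional subspaces of $\End_k(V)$ are closed) and cleanly separates the algebraic input from the topology, while the paper's argument stays entirely inside $k[x]$, needs no auxiliary lemma, and produces an explicit polynomial witness $q_U$ with $S = q_U(T)$. The hypothesis $p(T)=0$ enters both proofs in essentially the same way, as a bound on degree respectively dimension.
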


\begin{proof}
Clearly $k[T] \subseteq \overline{k[T]}$. To show the opposite inclusion, let us take $S \in \overline{k[T]}$, and prove that $S \in k[T]$.

By the properties of the function topology, for each finite-dimensional subspace $W \subseteq V$ there exists $q(x) \in k[x]$ such that $S|_W = q(T)|_W$. For each $W$ let $q_W(x) \in k[x]$ be such a polynomial of least degree. Since $p(T)=0$, we have $\deg(q_W) < \deg (p)$ for each $W$, by the division algorithm. Thus, we can find a finite-dimensional subspace $U \subseteq V$ such that $\deg(q_U) \geq \deg(q_W)$ for all $W$.

Now let $W \subseteq V$ be any finite-dimensional subspace. Then $q_{U+W}(T)|_U = S|_U = q_U(T)|_U,$ and hence $\deg(q_{U+W}) = \deg(q_U)$, by our definition of the $q_W$ and choice of $U$. Thus, $q(x) = q_{U}(x)-q_{U+W}(x)$ is a polynomial of degree at most $\deg (q_U)$ such that $q(T)|_U = 0$. If $q(x)$ were nonzero, then this would imply, upon applying the division algorithm to $q_U(x)$ and $q(x)$, that there is a polynomial $q'(x) \in k[x]$ such that $\deg(q') <\deg(q) \leq\deg(q_U)$ and $S|_U = q'(T)|_U$, contradicting the minimality of the degree of $q_U$. Therefore $q(x) = 0$, and hence  $q_{U+W}(x) = q_U(x)$, which implies that $S|_W = q_{U+W}(T)|_W = q_U(T)|_W.$ Since $W$ was arbitrary, this means that $S=q_U(T)$, and hence $S \in k[T]$.
\end{proof}

\begin{theorem} \label{double-cent-thrm}
Let $k$ be a field, $V$ a $k$-vector space, and $T \in \End_k(V)$. If there exists $p(x) \in k[x] \setminus k$ such that $p(T)=0$, then $C(C(T))=\overline{k[T]}= k[T]$.
\end{theorem}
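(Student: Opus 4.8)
The plan is to assemble the statement from three results already in hand, so that essentially no new argument is needed. First I would invoke Corollary~\ref{poly-thrm}: since $p(T)=0$ for some $p(x)\in k[x]\setminus k$ of degree $n$, we may write $V=\bigoplus_{\lambda\in\Lambda}\langle\{v_\lambda, T(v_\lambda),\dots,T^{n-1}(v_\lambda)\}\rangle$ for suitable $v_\lambda\in V$. Each summand $V_\lambda=\langle\{v_\lambda, T(v_\lambda),\dots,T^{n-1}(v_\lambda)\}\rangle$ is finite-dimensional by construction, and it is $T$-invariant: $T$ carries $T^{i}(v_\lambda)$ to $T^{i+1}(v_\lambda)\in V_\lambda$ for $i<n-1$, while $T^{n}(v_\lambda)$ lies in $V_\lambda$ because the relation $p(T)=0$, divided by the leading coefficient of $p$, expresses $T^{n}(v_\lambda)$ as a $k$-linear combination of $v_\lambda,\dots,T^{n-1}(v_\lambda)$. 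Hence $V$ is a direct sum of finite-dimensional $T$-invariant subspaces, which is exactly the hypothesis of Proposition~\ref{double-cent-prop}.

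Applying Proposition~\ref{double-cent-prop} then yields $C(C(T))=\overline{k[T]}$, the closure being taken in the function topology on $\End_k(V)$. Next I would apply Lemma~\ref{poly-cl-lemma}, which asserts precisely that $k[T]=\overline{k[T]}$ whenever $T$ satisfies some polynomial in $k[x]\setminus k$ — exactly our standing hypothesis. Chaining the two equalities gives $C(C(T))=\overline{k[T]}=k[T]$, as desired, and one should remark that the inclusion $k[T]\subseteq C(C(T))$ is trivial, so the content is entirely in the reverse inclusion.

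The ``hard part'' has in fact already been discharged upstream: Proposition~\ref{double-cent-prop} reduces the double-centralizer computation, summand by summand, to the classical finite-dimensional Double-Centralizer Theorem (Theorem~\ref{theorem:classicaldoublecent}); Corollary~\ref{poly-thrm}, via K\"othe's theorem, supplies the required decomposition of $V$ into finite-dimensional $T$-invariant pieces; and Lemma~\ref{poly-cl-lemma} collapses the closure back onto $k[T]$ by a division-algorithm argument. So there is no genuinely difficult remaining step — the proof proper is just the verification that these hypotheses line up, and I would keep it to a few lines.
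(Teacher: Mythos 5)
Your proposal is correct and follows essentially the same route as the paper's own proof: Corollary~\ref{poly-thrm} supplies the decomposition of $V$ into finite-dimensional $T$-invariant summands, Proposition~\ref{double-cent-prop} then gives $C(C(T))=\overline{k[T]}$, and Lemma~\ref{poly-cl-lemma} yields $\overline{k[T]}=k[T]$. Your explicit check that each summand is $T$-invariant (via $p(T)=0$) is a detail the paper leaves implicit, but nothing more is needed.
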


\begin{proof}
By Corollary~\ref{poly-thrm}, if $T$ satisfies the above condition, then it also satisfies the hypotheses of Proposition~\ref{double-cent-prop}, and hence $C(C(T))=\overline{k[T]}$. The desired conclusion now follows from Lemma~\ref{poly-cl-lemma}.
\end{proof}

The next example shows that the conclusion of Theorem~\ref{double-cent-thrm} does not hold for chain-triangularizable transformations.

\begin{example}
Let $k$ be a field and $V$ a $k$-vector space with basis $\{v_i \mid i \in \Z\}$. Define $T \in \End_k(V)$ by $T(v_i) = v_{i-1}$ for each $i \in \Z$, and extend linearly to all of $V$. As seen in Example~\ref{weak-tri-eg}, $T$ is chain-triangularizable but not triangularizable. We shall show that $$C(C(T)) = C(T) = k[T,T^{-1}] = \overline{k[T,T^{-1}]} \neq \overline{k[T]}.$$

Since $T$ is clearly invertible, $k[T,T^{-1}] \subseteq C(T)$, and since, by Lemma~\ref{l.Ring}, $C(T)$ is closed, we have $\overline{k[T,T^{-1}]} \subseteq C(T)$. Now let $S \in C(T)$ be any element, and write $S(v_0) = \sum_{i=-n}^m a_iv_i$ for some $n,m \in \N$ and $a_i \in k$. Then for every $l \in \Z$, we have $$S(v_l) = ST^{-l}(v_0) = T^{-l}S(v_0) = \sum_{i=-n}^m a_iT^{-l}(v_i) = \sum_{i=-n}^m a_iv_{i+l} = \sum_{i=-n}^m a_iT^{-i}(v_{l}),$$ and therefore $S = \sum_{i=-n}^m a_iT^{-i} \in k[T,T^{-1}].$ Thus $C(T) \subseteq k[T,T^{-1}]$. Combining this with $\overline{k[T,T^{-1}]} \subseteq C(T)$, we conclude that $$C(T) = k[T,T^{-1}] = \overline{k[T,T^{-1}]}.$$ Since $C(C(T))$ is the center of the ring $C(T)$, and $C(T) = k[T,T^{-1}]$ is commutative, we also have $C(T) = C(C(T))$.

It remains to show that $T^{-1} \notin \overline{k[T]}$, from which we can conclude that $\overline{k[T,T^{-1}]} \neq \overline{k[T]}$. Suppose, on the contrary, that $T^{-1} \in \overline{k[T]}$. Then there must be some $S \in k[T]$ such that $S(v_0) = v_{1} = T^{-1}(v_0)$. But $S = \sum_{i=0}^m a_iT^i$ for some $m \in \N$ and $a_i \in k$, and hence $$S(v_0) = \sum_{i=0}^m a_iT^i(v_0) = \sum_{i=0}^m a_iv_{-i} \neq v_1,$$ producing a contradiction. \hfill $\Box$
\end{example}

The previous theorem and example leave us with the following question.

\begin{question}
Let $k$ be a field, $V$ a $k$-vector space, and $T \in \End_k(V)$ a triangularizable transformation $($or, more generally, a transformation such that every finite-dimensional subspace of $\, V$ is annihilated by $p(T)$ for some $p(x) \in k[x]$$)$. Is it the case that $C(C(T))=\overline{k[T]}$?
\end{question}

\section*{Acknowledgements}

I am grateful to George Bergman for his numerous comments on an earlier version of this paper, which have led to significant improvements, including the addition of condition (5) to Theorem~\ref{tri-characterization}. I also would like to thank the referee for a very thoughtful review, and particularly for the suggestion to add condition (7) to Theorem~\ref{tri-characterization}. Finally, I would like to thank Greg Oman for helpful conversations about this material.

\vspace{.1in}

\noindent
Department of Mathematics, University of Colorado, Colorado Springs, CO, 80918, USA \newline
\noindent {\href{mailto:zmesyan@uccs.edu}{zmesyan@uccs.edu}}


\begin{thebibliography}{00}
\bibitem{BGMS}
M.\ Behboodi, A.\ Ghorbani, A.\ Moradzadeh-Dehkordi, and S.\ H.\ Shojaee, \emph{On Left K\"{o}the Rings and a Generalization of the K\"{o}the-Cohen-Kaplansky Theorem,} Proc.\ Amer.\ Math.\ Soc.\ \textbf{142} (2014) 2625--2631.

\bibitem{DF}  D.\ S.\ Dummit and R.\ M.\ Foote, \emph{Abstract Algebra, 3rd Edition,} John Wiley and Sons, Inc., Hoboken, New Jersey, 2004.

\bibitem{IMR}
M.\ C.\ Iovanov, Z.\ Mesyan, and M.\ L.\ Reyes, \emph{Infinite-Dimensional Diagonalization and Semisimplicity,} Israel J.\ Math.\ \textbf{215} (2016) 801--855.

\bibitem{Koethe}
G.\ K\"{o}the, \emph{Verallgemeinerte Abelsche Gruppen mit hyperkomplexem Operatorenring,} Math.\ Z.\ \textbf{39} (1935) 31--44.

\bibitem{PL}
P.\ Lagerstrom, \emph{A Proof of a Theorem on Commutative Matrices,} Bull.\ Amer.\ Math.\ Soc.\ \textbf{51} (1945) 535--536.

\bibitem{Radjabalipour}
M.\ Radjabalipour, \emph{Infinite-Dimensional Versions of the Primary, Cyclic and Jordan Decompositions,} Bull.\ Iranian Math.\ Soc.\ \textbf{41} (2015) 175--183.

\bibitem{RR}
H.\ Radjavi and P.\ Rosenthal, \emph{Simultaneous Diagonalization,} Springer-Verlag, New York, 2000.

\bibitem{ST} 
D.\ A.\ Suprunenko and R.\ I.\ Tyshkevich, \emph{Commutative Matrices,} Academic Press, New York, 1968. 

\bibitem{Warner}
S.\ Warner, \emph{Topological Rings,} North-Holland Mathematical Studies, 178, North-Holland, Amsterdam, 1993.

\end{thebibliography}
\end{document}